\numberwithin{equation}{section}
\theoremstyle{plain}
\newtheorem{theorem}{Theorem}[section]
\newtheorem{proposition}[theorem]{Proposition}
\newtheorem{corollary}[theorem]{Corollary}
\newtheorem{lemma}[theorem]{Lemma}
\theoremstyle{definition}
\newtheorem{remark}[theorem]{Remark}
\newtheorem{definition}{Definition}
\newcommand{\RR}{{\mathbb R}}
\newcommand{\SU}{{\mathcal U}}
\newcommand{\la}{\lambda}
\renewcommand{\epsilon}{\varepsilon}
\newcommand{\indi}{\operatorname{ind}}
\newcommand{\per}{\operatorname{Per}}
\newcommand{\cri}{\operatorname{Crit}}
\newcommand{\supp}{\operatorname{supp}}
\newcommand{\sing}{\mathrm{Sing}}
\newcommand{\cO}{\EuScript{O}}
\newcommand{\U}{\EuScript{U}}
\newcommand{\cC}{\EuScript{C}}
\newcommand{\J}{\EuScript{J}}
\newcommand{\ud}{\mathrm{d}}
\begin{document}

\title{Star Flows: a characterization via Lyapunov functions}

 \thanks{This is the last version of one presented at AIMS Conference on Dynamical Systems, Differential Equations and Applications
 2016. A first version of this work was developed during a postdoctoral research of L.S. at IMPA, in 2012. Since then, She has been partially supported by a FAPERJ-Funda\c c\~ao Carlos Chagas Filho de Amparo \`a Pesquisa do Estado do Rio de Janeiro Projects APQ1-E-26/211.690/2021 SEI-260003/015270/2021 and JCNE-E-26/200.271/2023 SEI-260003/000640/2023, by Coordena\c c\~ao de Aperfei\c coamento de Pessoal de N\'ivel Superior CAPES — Finance Code 001, CNPq-Conselho Nacional de Desenvolvimento Cient\'ifico e Tecnol\'ogico PDJ-152926/2016-0 and Projeto Universal 404943/2023-3, Fapesb-Funda\c c\~ao de Amparo \`a Pesquisa do Estado da Bahia Project JCB0053/2013, PRODOC/UFBA 2014 and INCTMat-CAPES postdoctoral fellowship 2012.}



\author{Luciana Salgado}
\address[L.S.]{ Universidade Federal do Rio de Janeiro, Instituto de Matem\'atica \\
Avenida Athos da Silveira Ramos 149 Centro de Tecnologia - Bloco C,
Cidade Universit\'aria - Ilha do Fund\~ao.\\
Caixa Postal (P.O. Box) 68530 CEP (Zip Code) 21941-909 Rio de Janeiro - RJ - Brazil \\
              Tel.: +55-21-39387909\\}
              \email{lsalgado@im.ufrj.br; lucianasalgado@ufrj.br}

\keywords{Star flows \and Singular hyperbolicity \and Lyapunov functions \and Dominated splitting \and Strong homogeneity.}

\subjclass{MSC 37D30 \and MSC 37D25 \and MSC 37L45}

\begin{abstract}
In this work, it is presented a characterization of star property for a $C^1$ vector field based on Lyapunov functions.
  It is also obtained conditions to strong homogeneity for singular sets by using the notion of infinitesimal Lyapunov functions.
  As an application, we obtain some results related to singular hyperbolic sets for flows.

\end{abstract}

\maketitle

\section{Introduction}
\label{intro}

Star systems has been studied by many renowned researchers, among them R. Ma\~n\'e and S. Liao, whom many years ago used it in order to prove the famous \emph{stability conjecture} from Palis and Smale. For more details about star systems, see for instance \cite{ArbMo2013},\cite{shi-gan-wen2014},\cite{GanWen2006},\cite{Liao1980},\cite{Man82},\cite{Man88},\cite{Palis87}.

   \begin{definition}\label{def:star-flow}
A $C^1$ vector field $X$ (or its flow $X_t$) is said to be \emph{star} if it cannot be $C^1$-approximated by ones exhibiting nonhyperbolic periodic orbits/singularities.
\end{definition}

The definition for diffeomorphisms is analogous.

In the case of diffeomorphisms, $C^1$ $\Omega$-structural stability is equivalent to star condition (and equivalent to Axiom A plus no cycle), see e.g. \cite{Man88}.

However, in the case of flows, the situation is much more complex. Although in absence of singularities, Gan and Wen \cite{GanWen2006} proved that nonsingular star flows satisfy Axiom A and no cycle condition, in the singular setting, this is no longer true.

One of the most emblematic example that a star flow does not satisfy Axiom A is the geometric Lorenz Attractor \cite{Guck76}, \cite{Lo63}.

From this, remained the question about singular star vector fields.
\begin{figure}[htpb]
\begin{center}
\includegraphics[scale=0.5]{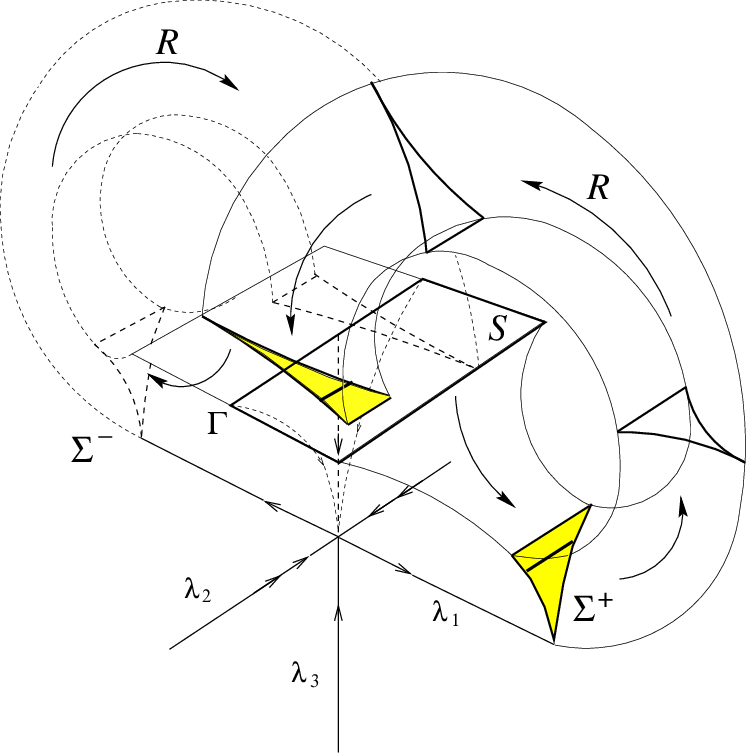}
\caption{Geometric Lorenz attractor is an example of singular star flow}
\label{Lorenz}
\end{center}
\end{figure}

In an attempt to study the behaviour of robust singular attractors like Lorenz ones, Morales, Pac\'ifico and Pujals in \cite{MPP99} defined the so called \emph{singular hyperbolic systems}.

In a remarkable work, W. Tucker \cite{Tu99} solved the $14^\circ$ problem of Smale by using computational tools and normal forms. He proved that Lorenz's system indeed supports a singular hyperbolic attractor, which means that it is a numerical example of star system.

\begin{figure}[htpb]
\begin{center}
\includegraphics[scale=0.7]{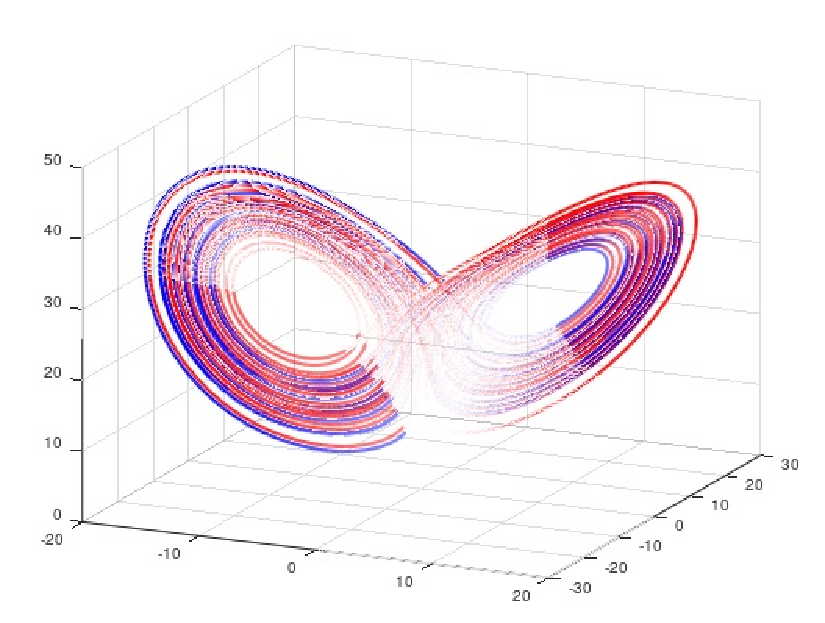}
\caption{Lorenz equations simulated by Octave, it is an example of singular star system}
\label{Lorenzsys}
\end{center}
\end{figure}

Many researchers have worked about this notion in order to understand it as an extension of the hyperbolic theory for invariant sets for flows which are not (uniformly) hyperbolic, but which have some robust properties, certain kind of weaker hyperbolicity and also admit singularities.

The authors in \cite{MPP99} proved that, $C^1$-generically in dimension three, chain recurrence classes are singular hyperbolic. Without the generic assumption this result does not hold, as we can see in \cite{BaMo14}.

In \cite{shi-gan-wen2014}, Gan, Shi and Wen, proved that if a chain recurrent class of a star flow $X$ has homogeneous index for singularities (the same dimension of stable manifold), then this is a singular hyperbolic set of $X$, in any dimension. Moreover, they proved that, $C^1$-generically in dimension four, the chain recurrent set of a star flow is singular hyperbolic.

 In \cite{MPP04}, it has been proved that every robustly transitive singular set for a three dimensional flow is a partially hyperbolic attractor or repeller and the singularities in this set must be Lorenz-like. In \cite{GanLiWen2005}, Gan, Li and Wen extended the result in \cite{MPP04} to higher dimension assuming that the set is also strongly homogeneous. We recall that a vector field $X$ is said to be \emph{strongly homogeneous of index $0 \leq \indi(\Lambda) \leq n-1$} over a set $\Lambda$ whether it cannot be $C^1$-approximated by one which has some hyperbolic periodic orbit of index different of $\indi(\Lambda)$ in a neighborhood $U$ of $\Lambda$. Here, the index $\indi(\cdot)$ means the dimension of the contracting subbundle from the hyperbolic splitting of a hyperbolic periodic orbit .

We recall that a compact invariant set $\Lambda$ is \emph{robustly transitive} for a vector field $X$ if there exist a neighborhood $U$ of $\Lambda$ and a neighborhood $\SU \in \mathfrak{X}^{1}(M)$ of $X$ such that, for every $Y \in \SU$, the maximal invariant set $\Lambda_Y = \cap_{t \in \RR} X_t(U)$ is contained in the interior of $U$ and is non-trivially (not a single orbit) transitive.

Many researchers believed that, at least generically, singular star flows should be singular hyperbolic ones. There are many conjectures about this and various results involving the most varied assumptions, see for instance, \cite{ArbMo2013}, \cite{GanLiWen2005}, \cite{shi-gan-wen2014}.
However, in a recent work, Bonatti and da Luz \cite{BodL17} have defined \emph{multisingular hyperbolicity} which admit singularities of different indices in the same recurrent class, and have shown that this kind of hyperbolicity implies star condition. They also proved that the converse holds on the $C^1$ generic assumption. Moreover, da Luz \cite{dLuz18} announced an example of multisingular hyperbolic flow in dimension 5 which admits, robustly, two singularities of different indices in the same recurrence class. Also, in \cite{CdLYZ}, Crovisier et al simplify this notion by one that does not involve the blow-up of the singular set and the rescaling cocycle of the linear flows. For an early definition involving domination for Linear Poincar\'e flow and hyperbolicity, see \cite{Salg2014}.

In this paper, we prove a full characterization of star condition for flows via $\J$-algebra of Potapov (infinitesimal Lyapunov functions), see \cite{Pota60}, \cite{Pota79}, \cite{Wojtk85}, \cite{Wojtk01}. 

\begin{remark}\label{rmk:comparison}
This characterization encompass all star flows, without generic assumptions. 

In \cite{ArSal2012}, together with V. Ara\'ujo, we used the quadratic forms to characterize some hyperbolic features of flows, such as uniform, partial and $2$-sectional hyperbolicity. 
In a work in progress, one hope to show that this theory can be applied to others kind of hyperbolicities, including above cited cases, such as sectional (in the broad sense of Definition \ref{def:new-def-singhyp}), singular and multi-singular hyperbolicity  as in \cite{BodL17}, \cite{CdLYZ}.  
\end{remark}

We also show a relation between $\J$-algebra and strong homogeneity. Then, we apply this to obtain some results about singular hyperbolic systems.

Recall that a pseudo-euclidean space is a real finite-dimensional vector space endowed with a non-degenerate quadratic form (in case of an euclidean space, a positive definite one).

The $\J$-algebra here means a pseudo-euclidean structure given by a $C^1$ non-degenerate quadratic form $\J$, defined on $\Lambda$, which generates positive and negative cones with maximal dimension $p$ and $q$, respectively, with $p + q = \dim (M)$.

The maximal dimension of a cone in $T_xM$ is the maximal dimension of the subspaces contained in there.

This algebraic/geometric approach has been very useful in the study of weak and uniform hyperbolicity, see \cite{BurnKatok94}, \cite{lewow80}, \cite{lewow89}, \cite{Wojtk85}, \cite{Wojtk01}. In \cite{ArSal2012}, this author jointly with V. Ara\'ujo, obtained characterizations of partial and singular/sectional hyperbolicity based on $\J$-algebra. In \cite{ArSal2015}, the same authors proved an equivalence between dominated splittings for the flow and dominated splittings for the $k$-th exterior powers of the tangent cocycle.

More results relating geometric and algebraic features of singular hyperbolicity can be viewed in \cite{ArSal2012}, \cite{ArSal2015}, \cite{SalgCoelho2017}, for the classical sectional and singular hyperbolicity definitions. Also see \cite{Salg2016} for singular hyperbolicity in a broad sense involving sectional expansion of intermediate dimensions between two and the full dimension of the central subbundle.

Our main result (Theorem \ref{cor:star-flow}) is about the so called \emph{star flows} and its relation with infinitesimal Lyapunov functions.

We recall that a point $p \in M$ is said to be a $C^1$ preperiodic point of $X$ if for any $C^1$ neighborhood $\mathcal{V}$ of $X$ and any neighborhood $U \subset M$ of $p$, there is $g\in \mathcal{V}$ and $q \in U$ such that $q \in \per (g)$. We denote this set by $P_*(X)$, and it is easy to see that it is closed and $X$-invariant.

 We can assume that the periodic points are hyperbolic and we can define a $C^1 i$-preperiodic point $p$ of $X$, $0 \leq i \leq d$, if there are sequences $X_n$ of flows and $p_n$ of periodic points of $X_n$ with index $i$ such that
\begin{align*}
\lim\limits_{n \to \infty} X_n = X \ \textrm{and} \ \lim\limits_{n \to \infty} p_n = p.
\end{align*}
We denote by $P_*^i(X)$ the set of $C^1 i$-preperiodic of $X$ and, then, $P_*(X) = \cup_{i=0}^{d} P_*^i(X)$.

\begin{remark}
The definitions concerning the quadratic forms are given in the next section, precisely in Definition \ref{def:J-separated} and remarks just after this.
\end{remark}

Before state the main result of this paper, let us recall a previous result from \cite{ArSal2012} on the expression of the time derivative of quadratic forms over a cocycle.

\begin{theorem}\cite[Theorem 2.7]{ArSal2012}
\label{thm:QF-deriv}
  Let $X_t$ be a flow defined on a positive invariant subset $U \subset M$, $A_t(x)$ a cocycle over $X_t$ on $U$ and $ D(x)=\lim_{t\to0}(A_t(x)-Id)/t$ its infinitesimal generator. Then,
  \begin{align}
    \J^\prime(v) := \partial_t \J(A_t(x)v) = \langle \tilde J_{X_t(x)}
    A_t(x)v,A_t(x)v\rangle,
    \end{align}
    for all $v\in E_x$ and $x\in U$, where
    \begin{equation}\label{eq:J-separated-tildeJ}
      \tilde J_x:= J\cdot D(x) + D(x)^* \cdot J
    \end{equation}
    and $D(x)^*$ denotes the adjoint of the linear map
    $D(x):E_x\to E_x$ with respect to the adapted inner
    product at $x$.

\end{theorem}

Precisely, here it is proved a full characterization of star property for flows based on Lyapunov quadratic functions. 

\begin{theorem}[Sun's Theorem]\label{cor:star-flow}
A vector field $X \in \mathfrak{X}^{1}(M)$ is star if, and only if, satisfies all of the next properties:
\begin{enumerate}
\item there is a neighborhood $U$ of $P_*(X)$ and a field of quadratic forms $\J$ with index $0 \leq \indi \leq \dim(M) - 1$ defined on the preperiodic set $P_*(X)$, $C^1$ along the flow direction over each preperiodic orbit, such that $X$ is strictly $\J$-separated on every $p \in P_*(X)$;

\item for every $\sigma \in \sing(X\vert_U)$ and $\forall v \in T_\sigma M, \J'(v) > 0$;

\item the linear Poincar\'e flow $P^t$ associated to each preperiodic orbit $\gamma$ of $X\vert_U$ is strictly $\J$-monotone.
\end{enumerate}
\end{theorem}

Let us explain why we are not requiring strict monotonicity in a whole nonsingular neighborhood of $X$. In fact, under certain assumptions, if we have strict monotonicity over any compact invariant nonsingular subset then it is a hyperbolic subset (see \cite[Theorem D]{ArSal2012}). The idea here is to extend the characterization for any star flow, since it is already known the existence of star flows which are not neither uniformly hyperbolic nor singular/sectional hyperbolic (see \cite{BodL17}).

Now, we present the definition of strong homogeneity.

\begin{definition}
\label{def:str-hom-set}
We say that a set $\Lambda$ is strongly homogeneous of index $\indi$ for a flow $X_t$, if there exist neighborhoods $U$ of $\Lambda$ and $\U$ of $X$ such that all periodic orbits in $U$ with respect to any flow in $\U$ have index $\indi$.

\end{definition}

   The second result guarantees that a compact connected invariant set for a flow is strongly homogeneous under the existence of a field of non-degenerate quadratic forms $\J$ defined on a neighborhood of this set such a way that the flow derivative $DX_t$ keeps positive cones $C_+(x) : = \{0\} \cup \{ v \in T_xM ; \J(x) v > 0\}$ invariants, i.e., $DX_t(C_+(x)) \subset C_+(X_t(x))$, for all $t>0$, $x \in \Lambda$ and the projected quadratic forms on the normal bundle, with respect to the flow direction, are monotonic functions.

\begin{theorem}
\label{mthm:strong-homog-equiv}
A compact invariant set $\Lambda$ for $X \in \mathfrak{X}^{1}(M)$ is strongly homogeneous with index $\indi$ if and only if there is a neighborhood $U\subset M$ of $\Lambda$ and a continuous field of non-degenerate quadratic forms $\J$ on $U$ with fixed index $\indi(\J) = \indi$ such that the preperiodic set $P_*(X)$ of $X\vert_{U}$ is strictly $\J$-separated and the associated linear Poincar\'e flow $P^t$ is strictly $\J$-monotone on $P_*(X\vert_{U})$.
\end{theorem}

If $\Lambda$ is the maximal invariant set of a trapping region $U$ and we require, in addition, that the field direction must be inside the non-positive cone, we obtain an equivalence between the existence of such a quadratic forms and singular hyperbolicity, as in \cite[Theorem D]{ArSal2012}.

Note that, in the last result, monotonicity is only required on preperiodic orbits.

If we require it over any nonsingular compact invariant subsets in $\Lambda$, it is possible to evaluate the index of a singularity, once it is accumulated by regular orbits.

\begin{corollary}\label{mcor:strong-homog}
Let $\Lambda$ be a maximal invariant set of a neighborhood $U$ for $X \in \mathfrak{X}^{1}(M)$. Then, $\Lambda$ is strongly homogeneous with index $\indi$ and $\indi(\sigma) \geq \indi$ for all $\sigma \in \sing(X\vert_{\Lambda})$ if there is a field of non-degenerate $C^1$ quadratic forms $\J$ on $U$ with index $\indi(\J) = \indi$ such that $X$ is
strictly $\J$-separated, the associated linear Poincar\'e flow $P^t$ is strictly $\J$-monotone on every compact invariant nonsingular subset $K$ of $U$ and for every $\sigma \in \sing(X\vert_U)$ and $\forall v \in T_\sigma M, \J'(v) > 0$.
\end{corollary}

\vspace{0.1in}

We may ask if the converse of Corollary \ref{mcor:strong-homog} is valid. But, just by supposing strongly homogeneity, we could not obtain the field of quadratic forms, because we need some kind of decomposition on the tangent bundle to create the cones.

\begin{theorem}
\label{mthm:strong-homog-to-quad}
Let $\Lambda$ be a 
compact invariant set whose singularities are hyperbolic (if any) and accumulated by regular orbits for a $C^1$ vector field $X$, which is strongly homogeneous with index $\indi$ and $\indi(\sigma) > \indi$ for all $\sigma \in \sing(X\vert_{\Lambda})$.
Then, there exists a field of non-degenerate quadratic forms $\J$ on $\Lambda$ with index $\indi(\J) = \indi(\Lambda)$ for which $X$ is $\J$-separated and the associated linear Poincar\'e flow $P^t$ is strictly $\J$-monotone on every compact invariant nonsingular subset $\Gamma$ of $\Lambda$.
\end{theorem}

As an application, we obtain some results about partial hyperbolicity for robustly transitive strongly homogeneous singular sets of \cite{GanLiWen2005} and transitive set of \cite{ArbMo2013}.

The text is organized as follow. In the first section, it is given the main definitions and stated the main  results. In second section, it is presented the main tools by using the notion of $\J$-algebra of Potapov. In third section, it is given some applications concerning singular hyperbolicity. In fourth section is proved the Theorems \ref{cor:star-flow}, \ref{mthm:strong-homog-equiv} and Corollary \ref{mcor:strong-homog}. In fifth section is given the proof of Theorem \ref{mthm:strong-homog-to-quad}.

\section{Some definitions and auxiliary results}
\label{sec:prelim-definit}

Now, we give some definitions.

Let $M$ be a connected compact finite $d$-dimensional
manifold, $d \geq 3$, without boundary, together
with a flow $X_t : M \to M, t \in \mathbb{R}$ generated by a
$C^1$ vector field $X: M \to TM$.

An \emph{invariant set} $\Lambda$ for the flow of $X$ is a
subset of $M$ which satisfies $X_t(\Lambda)=\Lambda$ for all
$t\in\mathbb{R}$.

A \emph{trapping region} $U$ for a flow $X_t$ is an
open subset of the manifold $M$ which satisfies: $X_t(U)$ is
contained in $U$ for all $t>0$; and there exists $T>0$ such
that $\overline{X_t(U)} $ is contained in the interior of
$U$ for all $t>T$. The maximal invariant set
$\Lambda_X(U):= \cap_{t \geq 0} X_t(U)$ of $U$ is called
an \emph{attracting set}. An attracting set for $X$ which is transitive
is called an \emph{attractor} for $X$. A \emph{repeller} for $X$ is an attractor
for $-X$.

We say that a set $\Lambda$ is \emph{Lyapunov stable} if for every
neighborhood $U$ of $\Lambda$ there is another one $V \subset U$ such that
every point $p \in V$ has its forward orbit contained in $U$.

A \emph{singularity} for the vector field $X$
is a point $\sigma\in M$ such that $X(\sigma)=0$ or,
equivalently, $X_t(\sigma)=\sigma$ for all $t \in \mathbb{R}$. The
set formed by singularities is the \emph{singular set of $X$}
denoted $\sing(X)$ and $\per(X)$ is the set of periodic points of $X$.
  We say that a \emph{singularity is hyperbolic} if the eigenvalues of the derivative
$DX(\sigma)$ of the vector field at the singularity $\sigma$
have nonzero real part. The set of critical elements of $X$ is
the union of the singularities and the periodic orbits of $X$, and will be denoted by $\cri(X)$.

We recall that an hyperbolic set $\Lambda$ for a flow $X_t$ is an
invariant subset of $M$ with a decomposition $T_\Lambda M= E^s\oplus E^X \oplus E^u$
of the tangent bundle over $\Lambda$ which is a continuous splitting,
where $E^X$ is the direction of the vector field, the
subbundles are invariant under the derivative $DX_t$ of the flow
$$
  DX_t E^{s,X,u}_x=E^{s,X,u}_{X_t(x)},\
  x\in\Lambda, \ t \in \mathbb{R};
$$
where $E^s$ is uniformly contracted by $DX_t$ and $E^u$ is
uniformly expanded: there are $K,\lambda>0$ so that
\begin{align}\label{eq:def-hyperbolic}
\|DX_t\mid_{E^s_x}\|\le K e^{-\lambda t},
  \quad
  \|DX_{-t} \mid_{E^u_x}\|\le K e^{-\lambda t},
  \quad x\in\Lambda, \quad t\in\mathbb{R}.
\end{align}

 We say that a point  $x \in  M$ is {\em non-wandering} for $X$ provided for every neighborhood $U$ of $x$ there is a sequence of times $\tau_n \to \infty$ such that $X_{\tau_n}(x) \in U, \forall n \in \mathbb{N}$.
 We denote by $\Omega(X)$ the non-wandering set of $X$.

 Given $\epsilon, \tau > 0$, an $\epsilon$-chain from $x_0$ to $x_l$ for $X$ is a sequence $\{x_0, x_1, \cdots, x_l, t_1, \cdots, t_l\}$ such that for all $0 \leq j \leq l-1$, $t_j \geq \tau$ and the distance $d(X_{t_j}(x_{j}), x_{j+1}) < \epsilon$.

 We define the {\em chain recurrent set} of $X$ by $R(X) = \{x \in M; \textrm{there \ is \ an} \ \epsilon-\textrm{chain \ from} \ x \ \textrm{to} \ x\}$. We say that two points are {\em chain equivalent} provided, given $\epsilon > 0$, there is an $\epsilon$-chain from $x$ to $y$ and from $y$ to $x$. It is known that this is an equivalence relation, the equivalence classes are called {\em chain components} of $R(X)$ and, for flows, the components are actually the connected components of $R(X)$. If $X$ admits a single chain component on an invariant set $\Lambda$, we say that $X$ is {\em  chain transitive} on $\Lambda$. See \cite{Rob99}, for instance.

From Pugh's $C^1$ closing lemma we have $\Omega(X) \subset P_*(X) \subset R(X)$.

\begin{remark}\label{rmk:wen-preperiod}
As proved by Wen \cite{wen00}, $C^1$ preperiodic sets do not explode under $C^1$ perturbations, i.e.,
for any $0 \leq i \leq d$ and for any neighborhood $U$ of $P_*(X)$, there is a $C^1$ neighborhood $\mathcal{V}$ of $X$ such that $P_*(Y) \subset U$, for any $Y \in \mathcal{V}$.

Indeed, for example in \cite{shub87}, we can see that the recurrent set do not explode under $C^0$ perturbations, i.e., if $U$ is a neighborhood of $R(X)$ there is a neighborhood $\mathcal{V}$ of $X$ such that $R(Y) \subset U$ for all $Y \in \mathcal{V}$.
\end{remark}

Next, we explain the kind of hyperbolicities we are dealing with.

\begin{definition}\label{def1}
  A \emph{dominated splitting} over a compact invariant set $\Lambda$ of $X$
  is a continuous $DX_t$-invariant splitting $T_{\Lambda}M =
  E \oplus F$ with $E_x \neq \{0\}$, $F_x \neq \{0\}$ for
  every $x \in \Lambda$ and such that there are positive
  constants $K, \lambda$ satisfying
  \begin{align}\label{eq:def-dom-split}
    \|DX_t|_{E_x}\|\cdot\|DX_{-t}|_{F_{X_t(x)}}\|<Ke^{-\la
      t}, \ \textrm{for all} \ x \in \Lambda, \ \textrm{and
      all} \,\,t> 0.
  \end{align}
\end{definition}

A compact invariant set $\Lambda$ is said to be
\emph{partially hyperbolic} if it exhibits a dominated
splitting $T_{\Lambda}M = E \oplus F$ such that subbundle
$E$ is uniformly contracted. In this case $F$ is the
\emph{central subbundle} of $\Lambda$.

A compact invariant set $\Lambda$ is said to be
\emph{singular-hyperbolic} if it is partially hyperbolic and
the action of the tangent cocycle expands volume along the
central subbundle, i.e.,
\begin{align}\label{eq:def-vol-exp}
      \vert \det (DX_t\vert_{F_x}) \vert > C e^{\la t},
      \forall t>0, \ \forall \ x \in \Lambda.
    \end{align}

The following definition was given as a particular case of singular hyperbolicity.

\begin{definition}\label{def:sec-exp}
  A \emph{sectional hyperbolic set} is a singular hyperbolic one such that
    for every two-dimensional linear subspace
   $L_x \subset F_x$ one has
    \begin{align}\label{eq:def-sec-exp}
      \vert \det (DX_t \vert_{L_x})\vert > C e^{\la t},
      \forall t>0.
    \end{align}
  \end{definition}

In \cite{Salg2016}, this author give another definition of singular hyperbolicity encompassing the two previous ones, as follow.

\begin{definition}\label{def:new-def-singhyp}
  A compact invariant set $\Lambda \subset M$ is
  \emph{$p$-sectional hyperbolic or singular hyperbolic of order $p$} for $X$ if
  all singularities in $\Lambda$ are hyperbolic, there
  exists a partially hyperbolic splitting of the tangent bundle on
  $T_{\Lambda}M = E \oplus F$ and constants $C,\lambda > 0$ such that for every $x
  \in \Lambda$ and every $t>0$ we have
\begin{enumerate}
\item $\Vert DX_t\vert_{E_x} \Vert \leq C e^{- \lambda t}$;
\item $\vert \wedge^p DX_t \vert_{L_x}\vert > C^{-1} e^{\la t}$,
  for every $p$-dimensional linear subspace $L_x \subset
  F_x$.
\end{enumerate}
\end{definition}

In our applications here, we only deal with two dimensional sectional hyperbolic case, but we conjecture that analogous results hold for sectional hyperbolic sets of any order $p$, with $2 \leq p \leq \dim{F}$.

From now on, we consider $M$ a connected compact finite
dimensional riemannian manifold and all singularities of
$X$ (if any) are hyperbolic.

\subsection{Fields of quadratic forms}
\label{sec:fields-quadrat-forms}

From now, we introduce the quadratic forms and its properties.

Let $E_U$ be a finite dimensional vector bundle with base $U$ and $\J:E_U\to\RR$ be a continuous family of quadratic forms $\J_x:E_x\to\RR$ which are non-degenerate and have index
$0<q<\dim(E)=n$, where $U\subset M$ is an open set such that $X_t(U) \subset \overline{U}, \forall t \geq 0,$
for a vector field $X$. We also assume that $(\J_x)_{x\in U}$ is continuously differentiable along the flow.

The continuity assumption on $\J$ just means that for every
continuous section $Z$ of $E_U$ the map $U\to\RR$ given by
$x\mapsto \J(Z(x))$ is continuous. The $C^1$ assumption on
$\J$ along the flow means that the map $x\mapsto
\J_{X_t(x)} (Z(X_t(x)))$ is continuously differentiable for
all $x\in U$ and each $C^1$ section $Z$ of $E_U$.

The assumption that $M$ is a compact manifold enables us to
globally define an inner product in $E$ with respect to
which we can find an orthonormal basis associated to
$\J_x$ for each $x$, as follows.

Fixing an orthonormal basis
on $E_x$ we can define the linear operator
\begin{align*}
  J_x:E_x\to E_x \quad\text{such that}\quad \J_x(v)=<J_x
  v,v> \quad \text{for all}\quad v\in E_x,
\end{align*}
where $<,>=<,>_x$ is the inner product at $E_x$. Since we
can always replace $J_x$ by $(J_x+J_x^*)/2$ without changing
the last identity, where $J_x^*$ is the adjoint of $J_x$
with respect to $<,>$, we can assume that $J_x$ is
self-adjoint without loss of generality.  Hence, we
represent $\J(v)$ by a non-degenerate symmetric bilinear
form $<\J_x v,v>_x$. Now we use Lagrange's method to
diagonalize this bilinear form, obtaining a base
$\{u_1,\dots,u_n\}$ of $E_x$ such that
\begin{align*}
  \J_x(\sum_{i}\alpha_iu_i)=\sum_{i=1}^q -\lambda_i\alpha_i^2 +
  \sum_{j=q+1}^n \lambda_j\alpha_j^2, \quad
  (\alpha_1,\dots,\alpha_n)\in\RR^n.
\end{align*}
Replacing each element of
this base according to $v_i=|\lambda_i|^{1/2}u_i$ we deduce that
\begin{align*}
\J_x(\sum_{i}\alpha_iv_i)=\sum_{i=1}^q -\alpha_i^2 +
  \sum_{j=q+1}^n \alpha_j^2, \quad
  (\alpha_1,\dots,\alpha_n)\in\RR^n.
\end{align*}
Finally, we can redefine $<,>$ so that the base
$\{v_1,\dots, v_n\}$ is orthonormal. This can be done
smoothly in a neighborhood of $x$ in $M$ since we are
assuming that the quadratic forms are non-degenerate; the
reader can check the method of Lagrange in a standard Linear
Algebra textbook and observe that the steps can be performed
with small perturbations, for instance in \cite{Maltsev63}.

In this adapted inner product we have that $J_x$ has entries
from $\{-1,0,1\}$ only, $J_x^*=J_x$ and also that
$J_x^2=J_x$.

Having fixed the orthonormal frame as above, the
\emph{standard negative subspace} at $x$ is the one spanned
by $v_{1},\dots, v_{q}$ and the \emph{standard positive
  subspace} at $x$ is the one spanned $v_{q+1},\dots,v_n$.

\subsubsection{Positive and negative cones}
\label{sec:positive-negative-co}

Let $\cC_\pm=\{C_\pm(x)\}_{x\in U}$ be the family of
positive and negative cones
\begin{align*}
  C_\pm(x):=\{0\}\cup\{v\in E_x: \pm\J_x(v)>0\}  \quad x\in U
\end{align*}
and also let $\cC_0=\{C_0(x)\}_{x\in U}$ be the corresponding
family of zero vectors $C_0(x)=\J_x^{-1}(\{0\})$ for all
$x\in U$.
In the adapted coordinates obtained above we have
\begin{align*}
  C_0(x)=\{v=\sum_{i}\alpha_iv_i\in E_x :
  \sum_{j=q+1}^n \alpha_j^2 = \sum_{i=1}^q
  \alpha_i^2\}
\end{align*}
is the set of \emph{extreme points} of $C_\pm(x)$.

The following definitions are fundamental to state our main
result.

\begin{definition}
\label{def:J-separated}
Given a continuous field of non-degenerate quadratic forms
$\J$ with constant index on the trapping region $U$ for the
flow $X_t$, we say that the flow is
\begin{itemize}
\item $\J$-\emph{separated} if $DX_t(x)(C_+(x))\subset
  C_+(X_t(x))$, for all $t>0$ and $x\in U$;
\item \emph{strictly $\J$-separated} if $DX_t(x)(C_+(x)\cup
  C_0(x))\subset C_+(X_t(x))$, for all $t>0$ and $x\in U$;
\item $\J$-\emph{monotone} if $\J_{X_t(x)}(DX_t(x)v)\ge \J_x(v)$, for each $v\in
  T_xM\setminus\{0\}$ and $t>0$;
\item \emph{strictly $\J$-monotone} if $\partial_t\big(\J_{X_t(x)}(DX_t(x)v)\big)\mid_{t=0}>0$,
  for all $v\in T_xM\setminus\{0\}$, $t>0$ and $x\in U$;
\item $\J$-\emph{isometry} if $\J_{X_t(x)}(DX_t(x)v) = \J_x(v)$, for each $v\in T_xM$ and $x\in U$.
\end{itemize}
\end{definition}
Thus, $\J$-separation corresponds to simple cone invariance
and strict $\J$-separation corresponds to strict cone
invariance under the action of $DX_t(x)$.

\begin{remark}\label{rmk:J-separated-C-}
  If a flow is strictly $\J$-separated, then for $v\in T_xM$
  such that $\J_x(v)\le0$ we have
  $\J_{X_{-t}(x)}(DX_{-t}(v))<0$ for all $t>0$ and $x$ such
  that $X_{-s}(x)\in U$ for every $s\in[-t,0]$. Indeed,
  otherwise $\J_{X_{-t}(x)}(DX_{-t}(v))\ge0$ would imply
  $\J_x(v)=\J_x\big(DX_t(DX_{-t}(v))\big)>0$, contradicting
  the assumption that $v$ was a non-positive vector.

  This means that a flow $X_t$ is strictly
    $\J$-separated if, and only if, its time reversal
    $X_{-t}$ is strictly $(-\J)$-separated.
\end{remark}

A vector field $X$ is $\J$-\emph{non-negative} on $U$ if
$\J(X(x))\ge0$ for all $x\in U$, and
$\J$-\emph{non-positive} on $U$ if $\J(X(x))\leq 0$ for all
$x\in U$. When the quadratic form used in the context is
clear, we will simply say that $X$ is non-negative or
non-positive.

We apply this notion to the linear Poincar\'e flow defined on
regular orbits of $X_t$ as follows.

Suppose that the vector field $X$ is non-negative on $U$.
Then, the span $E^X_x$ of $X(x)\neq 0$ is a $\J$-non-degenerate subspace.

According to item (1) of Proposition~\ref{pr:propbilinear}, we have
that $T_xM=E_x^X\oplus N_x$, where $N_x$ is the
pseudo-orthogonal complement of $E^X_x$ with respect to the
bilinear form $\J$, and $N_x$ is also
non-degenerate. Moreover, by the definition, the index of $\J$
restricted to $N_x$ is the same as
the index of $\J$.
Thus, we can define on $N_x$ the positive and negative cones with core $N_x^+$ and $N_x^-$, respectively.

Define the Linear Poincar\'e Flow $P^{\, t}$ of $X_t$
along the orbit of $x$, by projecting $DX_t$ orthogonally
(with respect to $\J$) over $N_{X_t(x)}$ for each $t\in\RR$:
\begin{align*}
  P^{\, t} v := \Pi_{X_t(x)}DX_t v,\\
  \ v\in T_x M, \ t\in\RR, X(x) \neq 0,
\end{align*}
where $\Pi_{X_t(x)}:T_{X_t(x)}M\to N_{X_t(x)}$ is the
projection on $N_{X_t(x)}$ parallel to $X(X_t(x))$.  We
remark that the definition of $\Pi_x$ depends on $X(x)$ and
$\J_X$ only. The linear Poincar\'e flow $P^{\,t}$ is a linear
multiplicative cocycle over $X_t$ on the set $U$ with the
exclusion of the singularities of $X$.

In this setting we can say that the linear Poincar\'e flow is
(strictly) $\J$-separated and (strictly) $\J$-monotonous
using the non-degenerate bilinear form $\J$ restricted to
$N_x$ for a regular $x\in U$.

More precisely: $P^t$ is $\J$-monotonous if $\partial_t\J(P^tv)\mid_{t=0}\ge0$, for
each $x\in U, v\in T_xM\setminus\{0\}$ and $t>0$, and
strictly $\J$-monotonous if $\partial_t\J(P^tv)\mid_{t=0}>0$,
for all $v\in T_xM\setminus\{0\}$, $t>0$ and $x\in U$.

\begin{proposition}\cite{Wojtk01}
  \label{pr:J-separated-spectrum}
  Let $L:V\to V$ be a $\J$-separated linear operator. Then
  \begin{enumerate}
  \item $L$ can be uniquely represented by $L=RU$, where $U$
    is a $\J$-isometry and
    $R$ is $\J$-symmetric (or $\J$-pseudo-adjoint; see
    Proposition~\ref{pr:propbilinear}) with positive
    spectrum.
  \item the operator $R$ can be diagonalized by a
    $\J$-isometry. Moreover the eigenvalues of $R$ satisfy
    \begin{align*}
      0<r_-^q\le\dots\le r_-^1=r_-\le r_+=r_1^+\le\dots\le r_+^p.
    \end{align*}
  \item the operator $L$ is (strictly) $\J$-monotonous if,
    and only if, $r_-\le (<) 1$ and $r_+\ge (>) 1$.
  \end{enumerate}
\end{proposition}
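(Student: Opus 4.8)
The plan is to obtain a $\J$-polar decomposition $L=RU$ with $R$ the positive $\J$-self-adjoint factor and $U$ a $\J$-isometry, and then to extract (2) and (3) from the spectral decomposition of $R$. I work in an adapted orthonormal frame as constructed above, so $\J(v)=\langle Jv,v\rangle$ with $J^{*}=J$ and $J^{2}=\mathrm{id}$, and I use the $\J$-pseudo-adjoint $L^{[*]}=JL^{*}J$, characterized by $\J(Lv,w)=\J(v,L^{[*]}w)$, together with the elementary identities $(L^{[*]})^{[*]}=L$, $(ST)^{[*]}=T^{[*]}S^{[*]}$ and $(L^{-1})^{[*]}=(L^{[*]})^{-1}$ from Proposition~\ref{pr:propbilinear}. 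Since a factorization $L=RU$ with $R$ of positive spectrum and $U$ a $\J$-isometry is necessarily invertible, I assume throughout that $L$ is invertible (this also holds automatically in the applications, where $L$ is a value of the tangent cocycle or of the linear Poincar\'e flow). Put $A:=LL^{[*]}$, which is $\J$-self-adjoint because $A^{[*]}=(L^{[*]})^{[*]}L^{[*]}=LL^{[*]}=A$.

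The core of the argument, and the step I expect to be the main obstacle, is to show that $A$ is diagonalizable over $\RR$, has positive spectrum, and admits a $\J$-orthonormal eigenbasis. First, the $\J$-pseudo-adjoint of a $\J$-separated operator is again $\J$-separated (a standard feature of the $\J$-algebra of Potapov; see Proposition~\ref{pr:propbilinear} and \cite{Pota60,Wojtk01}), so $A=LL^{[*]}$ is $\J$-separated. A $\J$-separated $\J$-self-adjoint operator can have no non-real eigenvalue --- a conjugate pair would live on a $\J$-indefinite invariant $2$-plane on which the operator has a genuine rotational part, and such a rotation carries vectors of $C_{+}$ into $C_{-}$, violating $\J$-separation --- and no nontrivial Jordan block for a real eigenvalue, since the corresponding shear along a $\J$-isotropic invariant subspace moves points of $\overline{C_{+}}$ out of $\overline{C_{+}}$. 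Hence $A$ is $\J$-diagonalizable. Finally its eigenvalues are positive: for an eigenvector $v$ with $\J(v)>0$ one has $\J(Av,v)=\J(L^{[*]}v)>0$ because $L^{[*]}$ is $\J$-separated, so the eigenvalue is positive; for $v$ with $\J(v)<0$ one uses instead $\J(A^{-1}v,v)=\J(L^{-1}v)<0$, since $L^{-1}$ preserves the negative cone (cf.\ Remark~\ref{rmk:J-separated-C-}), again giving a positive eigenvalue; and every eigenvector of a $\J$-orthonormal eigenbasis is $\J$-definite. Granting this, define $R:=A^{1/2}$ to be $\sqrt{a_{i}}\,\mathrm{id}$ on the $a_{i}$-eigenspace of $A$; then $R$ is $\J$-self-adjoint, $\J$-diagonalizable, has positive spectrum, and $R^{2}=A$. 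Setting $U:=R^{-1}L$ one computes $U^{[*]}U=L^{[*]}R^{-2}L=L^{[*]}(LL^{[*]})^{-1}L=\mathrm{id}$, so $U$ is a $\J$-isometry and $L=RU$. Uniqueness is then automatic: if $L=R'U'$ with $U'$ a $\J$-isometry and $R'$ $\J$-self-adjoint with positive spectrum, then $R'^{2}=LL^{[*]}=A$, so $R'$ commutes with $A$ and preserves each $A$-eigenspace, on which $R'^{2}=a_{i}\,\mathrm{id}$ together with positivity of the spectrum forces $R'=\sqrt{a_{i}}\,\mathrm{id}$; hence $R'=R$ and $U'=R^{-1}L=U$. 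This settles (1).

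For (2), pick a $\J$-orthonormal eigenbasis $e_{1},\dots,e_{n}$ of $R$ with $\J(e_{i})=\varepsilon_{i}\in\{-1,+1\}$, arranged so that $\varepsilon_{1}=\dots=\varepsilon_{q}=-1$, $\varepsilon_{q+1}=\dots=\varepsilon_{n}=+1$, and with eigenvalues $r_{i}>0$ ordered as $r_{-}^{q}\le\dots\le r_{-}^{1}=:r_{-}$ on the negative indices and $r_{1}^{+}=:r_{+}\le\dots\le r_{+}^{p}$ on the positive ones. It remains to see $r_{-}\le r_{+}$. Let $e_{j}$ be a negative-type eigenvector with $r_{j}=r_{-}$ and $e_{k}$ a positive-type one with $r_{k}=r_{+}$, and set $w=e_{j}+e_{k}$, so $\J(w)=0$ and hence $w\in\overline{C_{+}}$; since $R^{2}=A$ is $\J$-separated, $\J(R^{2}w)=r_{+}^{2}-r_{-}^{2}\ge0$, i.e.\ $r_{+}\ge r_{-}$. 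This yields the asserted chain of inequalities.

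For (3), recall that $L$ is $\J$-monotone when $\J(Lv)\ge\J(v)$ for all $v\ne0$ and strictly $\J$-monotone when $\J(Lv)>\J(v)$ for all $v\ne0$. Writing $L=RU$ with $U$ a $\J$-isometry and substituting $w:=Uv$ (which ranges over $V\setminus\{0\}$ as $v$ does, with $\J(v)=\J(w)$), the condition $\J(Lv)\ge\J(v)$ for all $v\ne0$ becomes $\J(Rw)\ge\J(w)$ for all $w\ne0$. Expanding $w=\sum_{i}a_{i}e_{i}$ in the eigenbasis, $\J(Rw)-\J(w)=\sum_{i}\varepsilon_{i}(r_{i}^{2}-1)a_{i}^{2}$; testing on single basis vectors shows this is $\ge0$ for every $w$ exactly when $r_{i}\ge1$ for every positive index and $r_{i}\le1$ for every negative index, i.e.\ $r_{+}\ge1$ and $r_{-}\le1$, and it is $>0$ for every $w\ne0$ exactly when $r_{+}>1$ and $r_{-}<1$. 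This is precisely the claimed equivalence. The only genuinely nontrivial input is the $\J$-diagonalizability with positive spectrum established in the second paragraph; the rest is pseudo-adjoint bookkeeping and a one-line quadratic-form computation.
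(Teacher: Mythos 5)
Your route --- polar decomposition $L=RU$ with $A:=LL^{[*]}$, $R=A^{1/2}$, then reading (2) and (3) off a $\J$-orthonormal eigenbasis of $R$ --- is exactly the classical Potapov--Wojtkowski argument; note the paper itself offers no proof of this proposition but quotes it from \cite{Pota79,Wojtk01,Wojtk09}. The problem is that the step you yourself identify as the main obstacle, diagonalizability of $A$ with positive spectrum and a $\J$-orthonormal eigenbasis, is not actually established by your two heuristics, and one of them is false under the paper's (non-strict) definition of $\J$-separation.

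Concretely: take $V=\RR^2$, $\J(x,y)=x^2-y^2$, and work in the isotropic basis $e_1=(1,1)$, $e_2=(1,-1)$, so $\J(ae_1+be_2)=4ab$. The shear $S$ with $Se_1=e_1$, $Se_2=e_1+e_2$ is $\J$-self-adjoint, invertible, satisfies $\J(Sv)\ge\J(v)$ (hence maps $C_+$ into $C_+$, i.e.\ is $\J$-separated in the paper's sense), has spectrum $\{1\}$, and yet is a single Jordan block with isotropic eigendirection $e_1$; so a shear along a $\J$-isotropic invariant line need \emph{not} move $\overline{C_+}$ out of itself. Worse, $S=LL^{[*]}$ for the $\J$-separated operator $L$ given by $Le_1=e_1$, $Le_2=\tfrac12 e_1+e_2$ (here $L^{[*]}=L$), so your $A$ itself can fail to be diagonalizable and $R=A^{1/2}$ cannot be defined eigenspace-by-eigenspace as you do. (This is really an edge case of the statement as transcribed: in Wojtkowski's setting the separation is strict, and strictness is what excludes the shear, since its isotropic eigenvector would have to land in the open cone.) The non-real-eigenvalue exclusion has a similar gap: for a $\J$-self-adjoint operator the invariant real $2$-plane of a conjugate pair only satisfies $\J(x)+\J(y)=0$, so it may be totally isotropic rather than $\J$-indefinite, and even in the indefinite case the claim that a ``rotational part carries $C_+$ into $C_-$'' needs an actual argument (e.g.\ a Perron--Frobenius fixed-direction argument on the projective line of the plane), not an assertion. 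Finally, the fact that $L^{[*]}$ is again $\J$-separated is not contained in Proposition~\ref{pr:propbilinear}; it is true for invertible $L$ (rescale and use the classical equivalence of $L^*JL\ge J$ with $LJL^*\ge J$), but as written it is one more unproved ingredient. Your derivations of uniqueness in (1), of $r_-\le r_+$ in (2), and of (3) are fine once the spectral structure of $A$ is in hand --- but that spectral structure is the whole content of the Potapov--Wojtkowski theorem, so as submitted the proof has a genuine gap at its core.
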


\subsection{$\J$-separated linear maps}
\label{sec:j-separat-linear}

\subsubsection{$\J$-symmetrical matrixes and $\J$-selfadjoint operators}
\label{sec:j-symmetr-matrix}

The symmetrical bilinear form defined by $(v,w)=\langle J_x
v,w\rangle$, $v,w\in E_x$ for $x\in M$ endows
$E_x$ with a pseudo-Euclidean structure. Since $\J_x$ is
non-degenerate, then the form $(\cdot,\cdot)$ is likewise
non-degenerate and many properties of inner products are
shared with symmetrical non-degenerate bilinear forms. We
state some of them below.

We recall that $E^\perp:=\{v\in V: (v,w)=0
    \quad\text{for all}\quad w\in E\}$, the
    pseudo-orthogonal space of $E$, is defined using the
    bilinear form.
\begin{proposition}\cite{Maltsev63}
  \label{pr:propbilinear}
  Let $(\cdot,\cdot):V\times V \to\RR$ be a real symmetric
  non-degenerate bilinear form on the real finite
  dimensional vector space $V$.
  \begin{enumerate}
  \item $E$ is a subspace of $V$ for which $(\cdot,\cdot)$ is
    non-degenerate if, and only if, $V=E\oplus E^\perp$.
  \item Every base $\{v_1,\dots,v_n\}$ of $V$ can be
    orthogonalized by the usual Gram-Schmidt process of
    Euclidean spaces, that is, there are linear combinations
    of the basis vectors $\{w_1,\dots, w_n\}$ such that they
    form a basis of $V$ and
    $(w_i,w_j)=0$ for $i\neq j$.  Then this last base can be
    pseudo-normalized: letting $u_i=|(w_i,w_i)|^{-1/2}w_i$ we
    get $(u_i,u_j)=\pm\delta_{ij}, i,j=1,\dots,n$.
  \item There exists a maximal dimension $p$ for a subspace
    $P_+$ of $\J$-positive vectors and a maximal dimension
    $q$ for a subspace $P_-$ of $\J$-negative vectors;
    we have $p+q=\dim V$ and $q$ is known
    as the \emph{index} of $\J$.
  \item For every linear map $L:V\to\RR$ there exists a
    unique $v\in V$ such that $L(w)=(v,w)$ for each $w\in V$.
  \item For each $L:V\to V$ linear there exists a unique
    linear operator $L^+:V\to V$ (the pseudo-adjoint) such that
    $(L(v),w)=(v,L^+(w))$ for every $v,w\in V$.
  \item Every pseudo-self-adjoint $L:V\to V$, that is,
    such that $L=L^+$, satisfies
    \begin{enumerate}
    \item eigenspaces corresponding to distinct eigenvalues
      are pseudo-orthogonal;
    \item if a subspace $E$ is $L$-invariant, then $E^\perp$
      is also $L$-invariant.
    \end{enumerate}
  \end{enumerate}
\end{proposition}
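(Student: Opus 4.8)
The statement collects standard facts about real non-degenerate symmetric bilinear forms, and the plan is to deduce each item from non-degeneracy together with elementary dimension counting, treating them in the order $(4),(5),(1),(2),(3),(6)$ since several depend on the earlier ones. The key object is the linear map $\Phi\colon V\to V^{*}$ given by $\Phi(v)=(v,\cdot)$: it is linear, and its kernel is $V^{\perp}=\{0\}$ by non-degeneracy, so, comparing dimensions, $\Phi$ is an isomorphism. This is exactly item~(4): given $L\in V^{*}$, put $v=\Phi^{-1}(L)$, and uniqueness is injectivity of $\Phi$. For item~(5), fix $w\in V$ and apply item~(4) to the functional $v\mapsto(Lv,w)$, obtaining the unique vector $L^{+}(w)$ with $(v,L^{+}(w))=(Lv,w)$ for all $v$; linearity of $L^{+}$ and its uniqueness as an operator both follow from non-degeneracy and the uniqueness in item~(4).

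For item~(1): if $(\cdot,\cdot)$ is non-degenerate on $E$ then any $v\in E\cap E^{\perp}$ is $(\cdot,\cdot)$-orthogonal to all of $E$, hence $v=0$, so $E\cap E^{\perp}=\{0\}$; composing $\Phi$ with the surjective restriction $V^{*}\to E^{*}$ shows $v\mapsto(v,\cdot)|_{E}$ is onto with kernel $E^{\perp}$, so $\dim E+\dim E^{\perp}=\dim V$ and therefore $V=E\oplus E^{\perp}$. Conversely, if $V=E\oplus E^{\perp}$ and $v\in E$ is $(\cdot,\cdot)$-orthogonal to $E$, then decomposing an arbitrary $u\in V$ along $E\oplus E^{\perp}$ gives $(v,u)=0$, so $v\in V^{\perp}=\{0\}$, i.e.\ $(\cdot,\cdot)$ is non-degenerate on $E$. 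Item~(2) is proved by induction on $\dim V$: over $\RR$ the polarization identity $(v,w)=\tfrac12\bigl((v+w,v+w)-(v,v)-(w,w)\bigr)$ together with non-degeneracy guarantees the existence of an anisotropic vector $w_{1}$, which is a linear combination of the starting basis vectors; by item~(1), $V=\langle w_{1}\rangle\oplus\langle w_{1}\rangle^{\perp}$ with $(\cdot,\cdot)$ non-degenerate on $\langle w_{1}\rangle^{\perp}$, and the inductive hypothesis there yields orthogonal $w_{2},\dots,w_{n}$, each a combination of the original basis vectors obtained by subtracting $(\cdot,\cdot)$-projections exactly as in Gram--Schmidt, with the one modification that a vanishing diagonal entry forces one to combine rather than merely select. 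Setting $u_{i}=|(w_{i},w_{i})|^{-1/2}w_{i}$ gives $(u_{i},u_{j})=\pm\delta_{ij}$.

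Item~(3) then follows: in such a pseudo-orthonormal basis let $q$ be the number of indices with $(u_{i},u_{i})=-1$ and $p$ the number with $(u_{i},u_{i})=+1$; since the form is non-degenerate there are no zeros and $p+q=\dim V$. The span $P_{-}$ of the $q$ negative vectors is a $\J$-negative subspace of dimension $q$; and any $\J$-negative subspace $W$ meets the span $P_{+}$ of the $p$ positive vectors only in $\{0\}$ (a common vector would be both negative and positive, hence zero), so $\dim W+\dim P_{+}\le\dim V$, i.e.\ $\dim W\le q$. Symmetrically, every $\J$-positive subspace has dimension at most $p$, a bound attained by $P_{+}$. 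Hence $q$ (resp.\ $p$) is the maximal dimension of a $\J$-negative (resp.\ $\J$-positive) subspace — in particular independent of the choice of diagonalizing basis — and $p+q=\dim V$. Finally item~(6): if $Lv=\lambda v$, $Lw=\mu w$ with $\lambda\neq\mu$, then $\lambda(v,w)=(Lv,w)=(v,L^{+}w)=(v,Lw)=\mu(v,w)$ using $L=L^{+}$, so $(v,w)=0$, which is~(a); and if $E$ is $L$-invariant and $v\in E^{\perp}$, then $(Lv,w)=(v,Lw)=0$ for every $w\in E$ because $Lw\in E$, so $Lv\in E^{\perp}$, which is~(b). Everything here is routine; the only step that genuinely uses more than formal manipulation is item~(2), where the naive Gram--Schmidt order can fail because an intermediate $(w_{i},w_{i})$ may vanish, and I expect this — handled by the polarization argument above — to be the one point worth spelling out carefully.
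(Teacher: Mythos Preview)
Your proof is correct and complete; each item is handled by the standard argument, and you correctly flag the one genuine subtlety in item~(2), namely that naive Gram--Schmidt can stall at an isotropic intermediate vector, which you circumvent via polarization and the inductive splitting $V=\langle w_1\rangle\oplus\langle w_1\rangle^{\perp}$. There is nothing in the paper to compare against: the paper does not prove this proposition at all but simply states that ``the proofs are rather standard and can be found in~\cite{Maltsev63}.'' Your write-up is therefore more than the paper provides, and is essentially the textbook argument one would find in that reference.
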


The proofs are rather standard and can be found in
\cite{Maltsev63}.

The following simple result will be very useful in what follows.

\begin{lemma}\cite[Theorem 1.2]{Wojtk01}
  \label{le:kuhne}
Let $V$ be a real finite dimensional vector space endowed
with a non-positive definite and non-degenerate quadratic
form $\J:V\to\RR$.

If a symmetric bilinear form $F:V\times V\to\RR$ is
non-negative on $C_0$ then
\begin{align*}
  r_+=\inf_{v\in C_+} \frac{F(v,v)}{\langle Jv,v\rangle}
  \ge \sup_{u\in C_-}\frac{F(u,u)}{\langle Ju,u\rangle}=r_-
\end{align*}
and for every $r$ in $[r_-,r_+]$ we have
$F(v,v)\ge r\langle Jv,v\rangle$ for each vector $v$.

In addition, if $F(\cdot,\cdot)$ is positive on
$C_0\setminus\{0\}$, then $r_-<r_+$ and $F(v,v)>
r\langle Jv,v\rangle$ for all vectors $v$ and $r\in(r_-,r_+)$.
\end{lemma}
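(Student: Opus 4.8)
The plan is to reduce the whole statement to one algebraic identity, exploiting that for any two vectors on which $\J$ has opposite signs there are exactly two null directions in their linear span; the dimension of $V$ plays no further role. Fix $v\in C_+$ and $u\in C_-$; they are linearly independent, since any nonzero multiple of $u$ has negative $\J$ while $\langle Jv,v\rangle>0$. The polynomial $p(\la)=\langle J(v+\la u),v+\la u\rangle=\langle Ju,u\rangle\la^2+2\langle Jv,u\rangle\la+\langle Jv,v\rangle$ is concave, has $p(0)=\langle Jv,v\rangle>0$ and tends to $-\infty$ as $|\la|\to\infty$, so it has roots $\la_1<0<\la_2$ with $\la_1\la_2=\langle Jv,v\rangle/\langle Ju,u\rangle$. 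Put $w_i=v+\la_i u$; each $w_i$ is a \emph{nonzero} vector of $C_0$ (if $v+\la_i u=0$ then $v$ would be a nonzero multiple of $u$), so the hypothesis gives $F(w_i,w_i)\ge 0$. Expanding $F$ as a bilinear form and forming $\la_2F(w_1,w_1)-\la_1F(w_2,w_2)$, the cross term $F(v,u)$ cancels and one is left with
\begin{align*}
  \la_2F(w_1,w_1)-\la_1F(w_2,w_2)=(\la_2-\la_1)\Bigl(F(v,v)-\frac{\langle Jv,v\rangle}{\langle Ju,u\rangle}\,F(u,u)\Bigr);
\end{align*}
the left-hand side is $\ge0$ since $\la_2>0$, $-\la_1>0$, $F(w_i,w_i)\ge0$, and $(\la_2-\la_1)\langle Jv,v\rangle>0$, so $F(v,v)/\langle Jv,v\rangle\ge F(u,u)/\langle Ju,u\rangle$. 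Taking the infimum over $v\in C_+$ and the supremum over $u\in C_-$ gives $r_+\ge r_-$. For $r\in[r_-,r_+]$ and arbitrary $v$, one then checks $F(v,v)\ge r\langle Jv,v\rangle$ by splitting on the sign of $\langle Jv,v\rangle$: positive gives $F(v,v)/\langle Jv,v\rangle\ge r_+\ge r$; negative gives $F(v,v)/\langle Jv,v\rangle\le r_-\le r$, which reverses upon multiplying by the negative quantity; zero is the hypothesis on $C_0$.

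Under the stronger hypothesis that $F$ is positive on $C_0\setminus\{0\}$, the vectors $w_i$ above satisfy $F(w_i,w_i)>0$, so the displayed left-hand side is strictly positive and the pointwise comparison becomes strict: $F(v,v)/\langle Jv,v\rangle> F(u,u)/\langle Ju,u\rangle$ for all $v\in C_+$, $u\in C_-$. To conclude $r_-<r_+$ I would observe that both extrema are attained: along $\{\langle Jv,v\rangle=1\}\subset C_+$ a minimizing sequence $v_n$ cannot escape to infinity, for otherwise $v_n/|v_n|$ would subconverge to a unit vector $w\in C_0$ with $F(v_n,v_n)/|v_n|^2\to F(w,w)>0$, forcing $F(v_n,v_n)\to+\infty$; so $v_n$ subconverges to a minimizer $v^*\in C_+$, and symmetrically one gets a maximizer $u^*\in C_-$, whence $r_+=F(v^*,v^*)/\langle Jv^*,v^*\rangle>F(u^*,u^*)/\langle Ju^*,u^*\rangle=r_-$. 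The strict bound $F(v,v)>r\langle Jv,v\rangle$ for $r\in(r_-,r_+)$ and $v\neq0$ then follows as before, now with $r_+>r>r_-$ and $F>0$ on $C_0\setminus\{0\}$. (The edge case $\J$ negative definite, where $C_+=\{0\}$ and $r_+=+\infty$, is trivial.)

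I expect the crux to be the first paragraph — producing the two null vectors in $\mathrm{span}(v,u)$ and the cancellation identity — since this is exactly what converts ``$F\ge0$ on $C_0$'' into two-sided control of the Rayleigh-type quotients over the solid cones; the sign bookkeeping for the linear bound and the small compactness argument for strictness (needed only because strict pointwise inequalities need not survive passing to infima and suprema) are routine.
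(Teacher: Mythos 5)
Your proof is correct. Note that the paper itself contains no proof of Lemma~\ref{le:kuhne}: it is quoted from the $\J$-algebra literature (\cite{Wojtk01,Pota79}), so there is no internal argument to compare against; your reduction to the two null directions $w_1,w_2$ in $\mathrm{span}(v,u)$, together with the cancellation identity that eliminates $F(v,u)$ and produces $F(v,v)-\frac{\langle Jv,v\rangle}{\langle Ju,u\rangle}F(u,u)\ge 0$, is precisely the classical mechanism behind the cited proofs, and the sign case-analysis giving $F(v,v)\ge r\langle Jv,v\rangle$ for $r\in[r_-,r_+]$ is right. The one point that genuinely required care --- that strict pointwise inequalities between the quotients do not by themselves force $r_-<r_+$, since the level sets $\{\J=\pm1\}$ are noncompact --- you handle correctly: a minimizing (resp.\ maximizing) sequence escaping to infinity would have a unit limit direction in $C_0\setminus\{0\}$, where $F>0$ makes the quotient blow up the wrong way, so both extrema are attained and the gap is strict; only keep the (harmless) proviso, which you state, that the strict conclusion concerns $v\neq 0$.
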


\begin{remark}
  \label{rmk:Jseparated}
  Lemma~\ref{le:kuhne} shows that if $F(v,w)=\langle \tilde J
  v,w\rangle$ for some self-adjoint operator $\tilde J$ and
  $F(v,v)\ge0$ for all $v$ such that $\langle J v,
  v\rangle=0$, then we can find $a\in\RR$ such that
  $\tilde J \ge a J$. This means precisely that $\langle
  \tilde J v,v\rangle\ge a\langle Jv, v\rangle$ for all
  $v$.

  If, in addition, we have $F(v,v)>0$ for all $v$ such that
  $\langle J v, v\rangle=0$, then we obtain a strict
  inequality $\tilde J > a J$ for some $a\in\RR$ since the
  infimum in the statement of Lemma~\ref{le:kuhne} is
  strictly bigger than the supremum.
\end{remark}

The (longer) proofs of the following results can be found
in~\cite{Wojtk01} or in~\cite{Pota79}; see also~\cite{Wojtk09}.

For a $\J$-separated operator $L:V\to V$ and a
$d$-dimensional subspace $F_+\subset C_+$, the subspaces
$F_+$ and $L(F_+)\subset C_+$ have an inner product given by
$\J$. Thus both subspaces are endowed with volume
elements. Let $\alpha_d(L;F_+)$ be the rate of expansion of
volume of $L\mid_{F_+}$ and $\sigma_d(L)$ be the infimum of
$\alpha_d(L;F_+)$ over all $d$-dimensional subspaces $F_+$
of $C_+$.

\begin{proposition}\cite[Proposition 1.3]{Wojtk01}
  \label{pr:product-vol-exp}
  We have $\sigma_d(L)=r_+^1 \cdots r_+^d$, where $r^i_+$
  are given by Proposition~\ref{pr:J-separated-spectrum}.
\end{proposition}
Moreover, is easy to see that, if $L_1,L_2$ are $\J$-separated, then
  $\sigma_d(L_1L_2)\ge\sigma_d(L_1)\sigma_d(L_2)$.

The following corollary is very useful.

\begin{corollary} \cite[Corollary 1.5]{Wojtk01}
  \label{cor:compos-max-exp}
  For $\J$-separated operators $L_1,L_2:V\to V$ we have
  \begin{align*}
    r_+^1(L_1L_2)\ge r_+^1(L_1) r_+^1(L_2) \quad\text{and}\quad
    r_-^1(L_1L_2)\le r_-^1(L_1)r_-^1(L_2).
  \end{align*}
  Moreover, if the operators are strictly $\J$-separated,
  then the inequalities are strict.
\end{corollary}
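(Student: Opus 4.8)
The plan is to reduce everything to the case $d=1$ of Proposition~\ref{pr:product-vol-exp}, together with the duality between $\J$ and $-\J$ furnished by time reversal; throughout I write $r_\pm^1(L;\J)$ when it is necessary to record which quadratic form is used.

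\textbf{First inequality.} By the definitions of $\alpha_d$ and $\sigma_d$ preceding Proposition~\ref{pr:product-vol-exp}, for $d=1$ one has $\sigma_1(L)=\inf_{v\in C_+}\sqrt{\J(Lv)/\J(v)}$, and Proposition~\ref{pr:product-vol-exp} identifies this with $r_+^1(L)$. Hence the first inequality is the $d=1$ instance of the submultiplicativity $\sigma_1(L_1L_2)\ge\sigma_1(L_1)\sigma_1(L_2)$, which I would in any case reprove directly: for $v\in C_+$ we have $L_2v\in C_+$ since $L_2$ is $\J$-separated, so
\begin{align*}
\frac{\J(L_1L_2v)}{\J(v)}=\frac{\J\big(L_1(L_2v)\big)}{\J(L_2v)}\cdot\frac{\J(L_2v)}{\J(v)}\ \ge\ r_+^1(L_1)^2\,r_+^1(L_2)^2,
\end{align*}
and taking the infimum over $v\in C_+$ and a square root gives $r_+^1(L_1L_2)\ge r_+^1(L_1)\,r_+^1(L_2)$.

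\textbf{Second inequality.} Here I would pass to the inverses. First, if $L$ is $\J$-separated then $L^{-1}$ is $(-\J)$-separated: if $\J(v)<0$ but $\J(L^{-1}v)>0$, then $v=L(L^{-1}v)\in L(C_+)\subset C_+$, a contradiction, while $\J(L^{-1}v)=0$ is impossible because continuity of the inclusion $L(C_+)\subset C_+$ forces $\J(Lw)\ge 0$ for every $w\in C_0$. Next, writing $L=RU$ as in Proposition~\ref{pr:J-separated-spectrum} gives $L^{-1}=(U^{-1}R^{-1}U)\,U^{-1}$, where $U^{-1}R^{-1}U$ is $\J$-symmetric with positive spectrum equal to the reciprocals of the eigenvalues of $R$; since replacing $\J$ by $-\J$ interchanges the two spectral blocks in Proposition~\ref{pr:J-separated-spectrum}(2), the least eigenvalue on the $(-\J)$-positive side of $L^{-1}$ is the reciprocal of the largest eigenvalue on the $\J$-negative side of $L$, i.e.
\begin{align*}
r_+^1(L^{-1};-\J)=\big(r_-^1(L;\J)\big)^{-1}.
\end{align*}
Applying the first inequality, now for $-\J$, to the $(-\J)$-separated operators $L_2^{-1},L_1^{-1}$ and using $(L_1L_2)^{-1}=L_2^{-1}L_1^{-1}$, I get $r_+^1((L_1L_2)^{-1};-\J)\ge r_+^1(L_2^{-1};-\J)\,r_+^1(L_1^{-1};-\J)$, that is $\big(r_-^1(L_1L_2)\big)^{-1}\ge\big(r_-^1(L_2)\big)^{-1}\big(r_-^1(L_1)\big)^{-1}$; taking reciprocals yields $r_-^1(L_1L_2)\le r_-^1(L_1)\,r_-^1(L_2)$.

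\textbf{Strict inequalities, and the main obstacle.} If $L_1,L_2$ are strictly $\J$-separated, then by Remark~\ref{rmk:J-separated-C-} the inverses $L_1^{-1},L_2^{-1}$ are strictly $(-\J)$-separated, so it suffices to upgrade the first inequality to a strict one. Strict $\J$-separation means $L_i(\overline{C_+}\setminus\{0\})\subset C_+$, hence $\J(L_1L_2v)>0$ for all $v\in\overline{C_+}\setminus\{0\}$; after normalizing, the ratio $\J(L_1L_2v)/\J(v)$ tends to $+\infty$ as $v$ approaches $C_0$, so the infimum defining $r_+^1(L_1L_2)^2$ is attained at some $v_*\in C_+$, where the displayed chain estimate applies. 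To conclude $r_+^1(L_1L_2)>r_+^1(L_1)r_+^1(L_2)$ one refines the argument: strict $\J$-separation produces a strict nesting of the image cone inside $C_+$, which in the spirit of Remark~\ref{rmk:Jseparated} (the relevant infimum strictly exceeds the corresponding supremum once the image cone is compactly contained) makes at least one of the two factors in the chain estimate strictly exceed its bound. Converting this strict cone nesting into a strict gain for the composition is the only step I expect to require genuine care; everything else is a formal consequence of Propositions~\ref{pr:J-separated-spectrum} and~\ref{pr:product-vol-exp}.
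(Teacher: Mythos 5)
Your proof of the two non-strict inequalities is correct and follows the natural route: the first inequality is precisely the case $d=1$ of Proposition~\ref{pr:product-vol-exp} (your chain estimate is the proof of that case), and your reduction of the $r_-$ inequality to the $r_+$ inequality for the inverses with respect to $-\J$, via conjugating the polar decomposition of Proposition~\ref{pr:J-separated-spectrum}, is sound. Note that the paper itself gives no argument for this corollary --- it defers to \cite{Wojtk01} and \cite{Pota79} --- so on this part your write-up supplies more detail than the text does.

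The strictness step, which you yourself flag as the one needing ``genuine care'', is a genuine gap, and it cannot be closed under the stated hypotheses: with Definition~\ref{def:J-separated}, strict $\J$-separation does not imply the strict inequalities. Take $V=\RR^2$, $\J(x,y)=x^2-y^2$ and $L_1=L_2=\diag(2,\tfrac{1}{2})$. Both operators are strictly $\J$-separated (the closed cone $\{|x|\ge |y|\}$ is mapped into $\{|y|\le |x|/4\}$, projectively compactly inside $C_+$), and in Proposition~\ref{pr:J-separated-spectrum} they have $R=L_i$, $U$ the identity, so $r_+^1(L_i)=2$, $r_-^1(L_i)=\tfrac{1}{2}$; but $L_1L_2=\diag(4,\tfrac{1}{4})$ gives $r_+^1(L_1L_2)=4=r_+^1(L_1)\,r_+^1(L_2)$ and $r_-^1(L_1L_2)=\tfrac{1}{4}=r_-^1(L_1)\,r_-^1(L_2)$. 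This also pinpoints why your sketch fails: the infimum for $L_1L_2$ is attained at $v_*=(1,0)$, and $L_2v_*$ is again the minimizing direction for $L_1$, so \emph{both} factors in your chain attain their infima exactly; strict (even uniformly strict) nesting of the image cone inside $C_+$ does not rule out this alignment of extremal directions, so no refinement in the spirit of Remark~\ref{rmk:Jseparated} can produce a strict gain. What does survive composition is strictness in the sense of Remark~\ref{rmk:J-mon-spec}: if $r_+(L_i)>1$ and $r_-(L_i)<1$, the non-strict inequalities already yield $r_+(L_1L_2)>1$ and $r_-(L_1L_2)<1$, i.e.\ strict $\J$-monotonicity is preserved, and this is what the later applications actually use. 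Unless the ``Moreover'' is reinterpreted in that sense (or an extra hypothesis excluding the alignment of extremal directions is added), it is false as stated, so the missing step in your argument is not a technicality but an impossibility.
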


\begin{remark}\label{rmk:J-mon-spec}
Another important property about the singular values of a $\J$-separated operator $L$ is that
$$  r_+^1 = r_+ \ge 1 (> 1) \quad\text{and}\quad r_-^1 = r_- \le 1 (< 1)$$
if, and only if, $L$ is (strictly) $\J$-monotone.

The last property will be used a lot of times in our proofs.
\end{remark}

\subsubsection{Lyapunov exponents}

By Oseledec's Ergodic Theorem \cite{Os68}, there exist a full probability set $X$ such that for every $x \in Y$ there is an invariant decomposition
\begin{align*}
T_xM = \langle X\rangle \oplus E_{1}(x) \oplus \cdots \oplus E_{l(x)}(x)
\end{align*}
and numbers $\chi_1 < \cdots < \chi_l$ corresponding to the limits
\begin{align*}
  \chi_j = \lim\limits_{t \to +\infty} \frac{1}{t} \log \Vert DX_t(x) \cdot v\Vert,
\end{align*}
for every $v \in E_i(x)\setminus \{0\}, i = 1, \cdots, l(x)$.

In this setting, Wojtkowski \cite{Wojtk01} proved that the logarithm of the pseudo-Euclidean singular values $0 \leq r_q^- \leq \cdots \leq r_1^- \leq r_1^+ \leq \cdots \leq r_p^+$ of $DX_t$ are $\mu$-integrable, and obtained estimates of the Lyapunov exponents related to the singular eigenvalues of strictly $\J$-separated maps.

\vspace{0.1in}

\begin{theorem}\cite[Corollary 3.7]{Wojtk01}
\label{thm:lyap-exp-sing-val}
For $1 \leq k_1 \leq p$  and $1 \leq k_2 \leq q$

\begin{align*}
\chi^-_1 + \cdots + \chi^-_{k_1} \leq \sum_{i=1}^{k_1} \int \log r^-_i d\mu \ \textrm{and} \ \chi^+_1 + \cdots + \chi^+_{k_2} \geq \sum_{i=1}^{k_2} \int \log r^+_i d\mu.
\end{align*}

\end{theorem}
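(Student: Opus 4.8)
The plan is to deduce the estimate from Kingman's subadditive ergodic theorem applied to the volume functional $\sigma_k$ of Proposition~\ref{pr:product-vol-exp}, together with the geometry of the positive cone; by the ergodic decomposition we may and do assume $\mu$ is ergodic. First I would reduce to a single inequality. By Remark~\ref{rmk:J-separated-C-} the time--reversed flow $X_{-t}$ is strictly $(-\J)$-separated, the form $-\J$ has index $p$, the pseudo-Euclidean singular values of $DX_{-1}$ with respect to $-\J$ are the reciprocals of the numbers $r_-^i(DX_1)$, and the Lyapunov exponents of $X_{-t}$ are the opposites of those of $X_t$ (the measure $\mu$ being invariant for both). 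Hence the inequality for the $\chi^-_i$ is exactly the inequality for the $\chi^+_i$ applied to the pair $(X_{-t},-\J)$, and it suffices to prove, for $\mu$-a.e.\ $x$ and $1\le k\le p$,
\begin{align*}
\chi^+_1+\cdots+\chi^+_k\ \ge\ \sum_{i=1}^{k}\int\log r_+^i\,d\mu ,
\end{align*}
where $r_+^i$ now denotes the $i$-th pseudo-Euclidean singular value of the time--one map $DX_1$, so that $\sum_{i=1}^{k}\log r_+^i(DX_t)=\log\sigma_k(DX_t)$ for every $t$.

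For the lower bound I would invoke Kingman. The map $x\mapsto\log\sigma_k(DX_1(x))$ is $\mu$-integrable --- this is the integrability statement quoted just before the theorem, from~\cite{Wojtk01} --- and the cocycle relation $DX_{t+s}(x)=DX_s(X_t x)\,DX_t(x)$ combined with the super-multiplicativity $\sigma_k(L_1L_2)\ge\sigma_k(L_1)\sigma_k(L_2)$ of Proposition~\ref{pr:product-vol-exp} shows that $t\mapsto-\log\sigma_k(DX_t(x))$ is a subadditive cocycle over $X_t$. Kingman's theorem then yields, for $\mu$-a.e.\ $x$,
\begin{align*}
\lim_{t\to\infty}\frac1t\log\sigma_k(DX_t(x))
\ =\ \sup_{t>0}\frac1t\int\log\sigma_k(DX_t)\,d\mu
\ \ge\ \int\log\sigma_k(DX_1)\,d\mu
\ =\ \sum_{i=1}^{k}\int\log r_+^i\,d\mu .
\end{align*}
So the proof is reduced to the a.e.\ upper bound $\lim_{t\to\infty}\frac1t\log\sigma_k(DX_t(x))\le\chi^+_1+\cdots+\chi^+_k$.

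For this upper bound I would use that strict $\J$-separation forces, at every $t$ and $x$, the singular-value gap $r_-^1(DX_t)\le r_+^1(DX_t)$ of Proposition~\ref{pr:J-separated-spectrum}(2) together with the strict cone invariance $DX_t\big(\overline{C_+(x)}\big)\subset C_+(X_t(x))$ for $t>0$. From these the forward images of the closed positive cone contract onto a $DX_t$-invariant measurable $p$-dimensional subbundle $E^+(x)=\bigcap_{t>0}DX_t\big(\overline{C_+(X_{-t}(x))}\big)\subset C_+(x)$, and Oseledec's theorem~\cite{Os68} identifies the Lyapunov exponents carried by $E^+$ as the $p$ largest ones, which I write as $\chi^+_1\le\cdots\le\chi^+_p$. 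Let $W(x)\subset E^+(x)\subset C_+(x)$ be a $k$-dimensional subspace spanned by Oseledec directions realizing the $k$ smallest of the exponents on $E^+$. Since $\sigma_k$ is by definition an infimum of volume-expansion rates $\alpha_k$ over $k$-planes contained in the positive cone, $\sigma_k(DX_t(x))\le\alpha_k(DX_t(x);W(x))$, and the right-hand side has exponential growth rate exactly $\chi^+_1+\cdots+\chi^+_k$ on the full-measure Oseledec set; this gives the claimed bound and, combined with the previous paragraph, the theorem. (If one is content with the sum of the $k$ \emph{largest} exponents on the left, the trivial bound $\sigma_k(DX_t)\le\|\wedge^k DX_t\|$ already suffices in place of this step.)

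The step I expect to be the main obstacle is this identification of the cone field with the Oseledec filtration: one must verify that strict $\J$-separation of the individual linear maps $DX_t$ --- without assuming the stronger property of $\J$-monotonicity --- really does produce a measurable $DX_t$-invariant splitting $T_xM=E^-_x\oplus E^+_x$ with $E^+_x\subset C_+(x)$ carrying precisely the top $p$ exponents, that the slow subspace $W(x)$ can be chosen measurably and in Oseledec-general position so that $\frac1t\log\alpha_k(DX_t;W)$ converges as asserted, and that the vector-field direction $E^X$ (which is $\J$-non-negative, hence sits in $\overline{C_+}$) is absorbed consistently into $E^+$. The point is to pass from the pointwise singular-value gap $r_-^1\le r_+^1$ to a bundle-level dominated-splitting statement while keeping all objects measurable; everything else is a routine combination of Kingman's theorem with Propositions~\ref{pr:product-vol-exp} and~\ref{pr:J-separated-spectrum}.
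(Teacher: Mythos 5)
First, a point of comparison: the paper does not prove this statement at all --- it is imported verbatim as \cite[Corollary 3.7]{Wojtk01}, so there is no internal proof to measure your argument against. Your strategy is in fact the natural one and close in spirit to Wojtkowski's: supermultiplicativity of $\sigma_k$ (Proposition~\ref{pr:product-vol-exp}) plus Kingman gives the lower bound $\lim_t\frac1t\log\sigma_k(DX_t)\ge\sum_{i\le k}\int\log r_+^i\,d\mu$, and the time-reversal reduction via Remark~\ref{rmk:J-separated-C-} is fine (the singular values of $DX_{-1}$ at $x$ with respect to $-\J$ are the reciprocals of those of $DX_1$ at $X_{-1}(x)$, but the base-point shift disappears after integrating against the invariant measure; similarly, continuous-time Kingman needs $\sup_{0\le t\le1}|\log\sigma_k(DX_t)|$ integrable, which follows from compactness of $M$ and $X\in\Mundo$).

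The gap you flag yourself is the genuine one, and as written it is not closed: pointwise strict cone invariance of the maps $DX_t$ does not by itself make $\bigcap_{t>0}DX_t\bigl(\overline{C_+(X_{-t}x)}\bigr)$ a $p$-dimensional subspace, nor does it place the Oseledec spaces of the top $p$ exponents inside $\overline{C_+}$; without that, the choice of the $k$-plane $W(x)\subset C_+(x)$ realizing $\chi^+_1+\cdots+\chi^+_k$, and hence the upper bound on $\sigma_k$, is unjustified. In the setting in which this paper actually uses the theorem ($\J$ continuous and non-degenerate on a compact invariant set), the gap can be filled by citation rather than by your nested-cone construction: strict $\J$-separation on a compact set is equivalent to a dominated splitting $T_\Lambda M=E\oplus F$ with $\dim F=p$ and $F\subset C_+\cup\{0\}$ (see \cite{ArSal2012}), the bundle $F$ is uniformly inside the open cone (so the $\J$-induced volume entering $\alpha_k$ is uniformly comparable to the Riemannian one there, a comparability you also need but do not address), domination forces the top $p$ exponents to live on $F$, and Lyapunov regularity then gives the exact growth rate $\chi^+_1+\cdots+\chi^+_k$ for a measurable choice of $W(x)\subset F_x$ spanned by Oseledec directions. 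Be aware, however, that Wojtkowski's Corollary 3.7 is stated for measurable $\J$-monotone cocycles with no compactness or continuity available, and there this bundle-level step is the actual content of his proof (via the $\J$-polar decomposition); your argument, patched as above, proves the statement in the paper's setting but not in the generality of the cited result.
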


This result will be very useful in proof of Theorem \ref{mthm:strong-homog-to-quad}.


\section{Some applications}

In this section, we present some applications of this theory related to some kind of hyperbolicities, as partial and singular ones.
In particular, we provide another proof of \cite[Theorem A]{GanLiWen2005}.

\subsubsection{Some results about partial and sectional hyperbolicity from $\J$-separation}

The author, together with V. Ara\'ujo, proved in \cite{ArSal2012} the following useful theorem which relates partial hyperbolicity and $\J$-separated sets for a flow.

\begin{theorem}\cite[Theorem A]{ArSal2012}
  \label{mthm:Jseparated-parthyp}
  A maximal invariant subset $\Lambda$ of a trapping region
  $U$ whose singularities are hyperbolic
  is a partially hyperbolic set for a flow $X_t$ if, and
  only if, there is a $C^1$ field $\J$ of non-degenerate
  quadratic forms with constant index, equal to the
  dimension of the stable subspace of $\Lambda$, such that
  $X_t$ is a non-negative strictly $\J$-separated flow on
  $U$.
\end{theorem}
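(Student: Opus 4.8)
The plan is to prove Theorem~\ref{mthm:Jseparated-parthyp} (the statement being proved is the final one, i.e.\ \cite[Theorem A]{ArSal2012}, reproduced here) by establishing the two implications separately, using the linear-algebraic machinery of Propositions~\ref{pr:J-separated-spectrum}, \ref{pr:propbilinear}, \ref{pr:product-vol-exp} and Lemma~\ref{le:kuhne}.

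\textbf{From partial hyperbolicity to a field of quadratic forms.} Suppose $\Lambda=\Lambda_X(U)$ is partially hyperbolic with splitting $T_\Lambda M=E^s\oplus F$, where $\dim E^s=q$. The idea is to build $\J$ on a neighborhood of $\Lambda$ whose standard negative subspace is $E^s$ and whose standard positive subspace is a continuous complement extending $F$ (and containing the flow direction $E^X\subset F$, which is why $X$ will come out $\J$-non-negative). Concretely, on $\Lambda$ define $\J_x(v)=-\|v^s\|^2+\|v^c\|^2$ using the partially hyperbolic decomposition $v=v^s+v^c$, possibly after passing to an adapted metric in which the contraction in \eqref{eq:def-dom-split} and the domination hold with $K=1$ at unit time. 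Then extend $\J$ continuously (and $C^1$ along the flow) to a trapping neighborhood $U$; this is possible since the splitting, being dominated, extends continuously to a neighborhood. The key computation is that the domination inequality \eqref{eq:def-dom-split} forces $DX_t$ to map the closed positive cone strictly inside the positive cone: if $v=v^s+v^c$ with $\J_x(v)\ge 0$, i.e.\ $\|v^c\|\ge\|v^s\|$, then $\|DX_t v^s\|\le \lambda^t\|v^s\|\cdot\|DX_{-t}|_{F}\|^{-1}\cdots$ — more precisely one uses $\|DX_t|_{E^s}\|\cdot\|DX_{-t}|_{F}\|<1$ to get $\|DX_t v^s\| < \|DX_t v^c\|$ whenever $v^c\ne 0$, and $v^c\ne 0$ is guaranteed because $\J_x(v)\ge 0$ with $v\ne 0$ means $v^c\ne 0$. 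Hence $\J_{X_t(x)}(DX_tv)>0$, giving strict $\J$-separation. Non-negativity of $X$ is immediate since $X(x)\in E^X\subset F$, so $\J_x(X(x))=\|X(x)\|^2\ge 0$.

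\textbf{From a field of quadratic forms to partial hyperbolicity.} Conversely, assume $X_t$ is non-negative and strictly $\J$-separated on $U$, with $\J$ of constant index $q$. Strict $\J$-separation of the cocycle $DX_t$ means each $DX_t(x):T_xM\to T_{X_t(x)}M$ is a strictly $\J$-separated operator, so Proposition~\ref{pr:J-separated-spectrum} and Remark~\ref{rmk:J-mon-spec} apply fiberwise. The plan is to define $E^s_x$ as the set of vectors whose forward orbit stays in the negative cone (equivalently, using Remark~\ref{rmk:J-separated-C-}, the backward-expanded negative directions) and $F_x$ as a $\J$-orthogonal complement built from the positive cone; one shows these are subbundles of dimensions $q$ and $p=n-q$ by a standard cone-field argument (intersection of nested images of $\overline{C_-}$ under $DX_{-t}$ gives $E^s$; the analogous construction for $-\J$ via Remark~\ref{rmk:J-separated-C-} gives $F$). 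Uniform contraction of $E^s$ and the domination \eqref{eq:def-dom-split} then follow from quantitative strictness of the cone inclusion: using compactness of $\Lambda$ and continuity of $\J$, the strict inclusion $DX_1(\overline{C_+(x)})\subset C_+(X_1(x))$ is uniform, which by the submultiplicative estimates of Proposition~\ref{pr:product-vol-exp} and Corollary~\ref{cor:compos-max-exp} (applied to $r_-^1$) yields exponential decay of $\|DX_t|_{E^s}\|\cdot\|DX_{-t}|_{F}\|$; that $E^s$ itself is uniformly contracted (not just dominated) uses non-negativity of $X$, which puts the flow direction inside $F$ and forces the relevant $r_-^1 < 1$ strictly and uniformly. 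Hyperbolicity of the singularities is part of the hypothesis, so there is nothing to check there.

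\textbf{Main obstacle.} The delicate point is the passage from \emph{pointwise} strict cone invariance to \emph{uniform exponential} estimates — i.e.\ promoting ``$DX_1$ maps $\overline{C_+}$ into the open cone $C_+$'' to a genuine domination constant $\lambda<1$, and separately ensuring $E^s$ is uniformly contracted rather than merely dominated. This is exactly where one needs compactness of $\Lambda$ (to make the strict inclusion uniform over $\Lambda$, hence over a slightly smaller trapping neighborhood by continuity, invoking Remark~\ref{rmk:wen-preperiod}-style non-explosion of the maximal invariant set) together with the multiplicative-cocycle inequalities for $r^1_\pm$ from Proposition~\ref{pr:product-vol-exp} and Corollary~\ref{cor:compos-max-exp}; the role of $\J$-non-negativity of $X$ — locating $E^X$ inside the central (positive) cone so the contraction lands entirely on $E^s$ — is the subtle ingredient that separates ``partially hyperbolic'' from the weaker ``dominated splitting'' one would otherwise get, and it is where most of the care in the argument goes.
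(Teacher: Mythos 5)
You are proving a statement that this paper only quotes: Theorem~\ref{mthm:Jseparated-parthyp} is cited from \cite{ArSal2012} and no proof is given here, so I can only judge your argument on its own terms (and against the strategy of the cited source, which indeed goes through ``strict $\J$-separation $\Leftrightarrow$ dominated splitting'' plus a separate argument that non-negativity of $X$ upgrades domination to partial hyperbolicity). Your forward implication is the standard construction and is fine modulo routine care (extending the splitting off $\Lambda$ so that $X$ stays in the non-negative cone on all of $U$, smoothing to get $\J$ of class $C^1$, and using an adapted metric so the cone inclusion is strict for every $t>0$).

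The genuine gap is in the converse, exactly at the step you flag as delicate. Your cone-field argument does yield a dominated splitting $E\oplus F$ of index $q$, and non-negativity correctly places $X(x)\in F_x$; but your justification of uniform contraction of $E$ --- that non-negativity ``forces the relevant $r_-^1<1$ strictly and uniformly'' --- conflates strict $\J$-separation with strict $\J$-monotonicity. By Proposition~\ref{pr:J-separated-spectrum}(3) and Remark~\ref{rmk:J-mon-spec}, the condition $r_-<1<r_+$ \emph{is} monotonicity, which is not a hypothesis here; e.g.\ $L_t=\diag(e^{t},e^{2t})$ with $\J(v)=-v_1^2+v_2^2$ is strictly $\J$-separated for every $t>0$ yet has $r_-^1=e^{t}>1$, so strict separation alone never gives contraction of $E$, only domination. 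The actual mechanism is different: away from $\sing(X)$, domination together with $X(x)\in F_x$ gives $\|DX_t|_{E_x}\|\le K e^{-\lambda t}\,\|X(X_t(x))\|/\|X(x)\|$, hence uniform contraction on compact invariant nonsingular sets; near a singularity $\sigma$ this estimate degenerates (the denominator $\|X(x)\|$ is not bounded below), and one must use non-negativity of $X$ on a whole neighborhood of $\sigma$ together with hyperbolicity of $\sigma$ to identify $E_\sigma$ with a contracted subspace of $DX(\sigma)$ (this is what excludes, say, a repelling singularity, where $\J(X(x))<0$ at nearby points), and then glue the local estimate at singularities with the flow-direction estimate elsewhere. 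Your sketch dismisses the singularities (``nothing to check there''), but they are precisely where the hypotheses ``trapping region'' and ``hyperbolic singularities'' do their work; without that analysis your argument proves only that $\Lambda$ has a dominated splitting of index $q$, not that it is partially hyperbolic.
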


This result will be useful in our applications of Theorem \ref{mthm:strong-homog-equiv}.

In the sequence, we can give another proof of next result from \cite{GanLiWen2005}.

\begin{theorem}\cite[Theorem A]{GanLiWen2005} Let $X \in \mathfrak{X}^{1}(M)$, and $\Lambda$
be a robustly transitive singular set of $X$ that
is strongly homogeneous of index $\indi$. If every singularity $\sigma$ of $X$ is hyperbolic
of index $\indi(\sigma) > \indi$, then $\Lambda$ has a partially hyperbolic splitting of contracting
dimension Ind. Likewise, if every singularity $\sigma$ of $X$ is hyperbolic of index
$\indi(\sigma) \leq \indi$, then $\Lambda$ has a partially hyperbolic splitting of expanding dimension
$n - 1 - Ind$.
\end{theorem}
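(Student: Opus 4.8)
The plan is to deduce this statement from Theorem~\ref{mthm:Jseparated-parthyp} together with the machinery relating strong homogeneity to fields of quadratic forms developed in the preceding sections. Concretely, the strategy is: build a field of non-degenerate quadratic forms $\J$ on a neighborhood $U$ of $\Lambda$ with constant index $\indi$ for which $X$ is strictly $\J$-separated and the linear Poincar\'e flow is strictly $\J$-monotone on the nonsingular recurrent part, and then invoke Theorem~\ref{mthm:Jseparated-parthyp} to conclude partial hyperbolicity with contracting dimension $\indi$. For the ``likewise'' clause one runs the identical argument for the reversed flow $-X$: if every singularity has index $\indi(\sigma)\le\indi$, then for $-X$ every singularity has index $n-1-\indi(\sigma)\ge n-1-\indi$ in the co-index count, $\Lambda$ is still robustly transitive and strongly homogeneous (of the complementary index $n-1-\indi$), and applying the first part to $-X$ yields a partially hyperbolic splitting of contracting dimension $n-1-\indi$ for $-X$, i.e.\ of expanding dimension $n-1-\indi$ for $X$. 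So the proof reduces to the first assertion plus a time-reversal remark (using Remark~\ref{rmk:J-separated-C-}, which says strict $\J$-separation for $X_t$ is equivalent to strict $(-\J)$-separation for $X_{-t}$).

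The key steps for the first assertion are as follows. First, since $\Lambda$ is robustly transitive and $X$ is a star flow in a robust sense (robust transitivity plus strong homogeneity of index $\indi$ forces a star-like behavior on a neighborhood, as all nearby periodic orbits in $U$ have index $\indi$), one gets a dominated-type structure on the linear Poincar\'e flow over the nonsingular orbits: by the $C^1$-robustness of the index and a standard application of Ma\~n\'e's/Liao's uniform hyperbolicity argument for the linear Poincar\'e cocycle (exactly the kind of input used in \cite{GanLiWen2005, shi-gan-wen2014}), the linear Poincar\'e flow $P^t$ admits a dominated splitting $N = N^s\oplus N^u$ with $\dim N^s = \indi$ over the closure of the regular orbits in $\Lambda$. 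Second, from this dominated splitting for $P^t$ one constructs the quadratic form on the normal bundle: set $\J_N$ to be negative definite on $N^s$ and positive definite on $N^u$, suitably adapted so that domination gives strict cone invariance and $\J$-monotonicity of $P^t$ — this is precisely the passage from dominated splitting to adapted quadratic form, the same device used throughout \cite{ArSal2012} and encoded here in Lemma~\ref{le:kuhne} and Remark~\ref{rmk:Jseparated}. Third, one extends $\J_N$ to a field $\J$ on $T_xM$ by prescribing the behavior along the flow direction $E^X_x$; the hypothesis $\indi(\sigma) > \indi$ at every singularity is what allows the flow direction to be placed inside the \emph{non-positive} cone consistently near the singularities (so that $\J$ has the correct constant index $\indi$ globally on $U$ and strict separation is not destroyed at singularities — compare the condition ``field direction inside the non-positive cone'' mentioned after Theorem~\ref{mthm:strong-homog-equiv}). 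Fourth, check that $X$ is $\J$-non-negative and strictly $\J$-separated on $U$: non-negativity is built into step three, and strict separation on the whole maximal invariant set follows because strict separation holds on the (dense in the recurrent part) regular orbits and extends by continuity and invariance, using that singularities are isolated hyperbolic with the index inequality. Finally, apply Theorem~\ref{mthm:Jseparated-parthyp}: $X_t$ is a non-negative strictly $\J$-separated flow on the trapping region $U$ with constant index $\indi$, hence $\Lambda = \Lambda_X(U)$ is partially hyperbolic with stable dimension $\indi$, which is the claimed ``contracting dimension $\indi$.''

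The main obstacle I expect is step one: producing the dominated splitting for the linear Poincar\'e flow $P^t$ over the regular orbits of $\Lambda$ from robust transitivity plus strong homogeneity. This is the genuinely dynamical (non-algebraic) input, and it is where the hypotheses ``robustly transitive'' and ``strongly homogeneous of index $\indi$'' are actually consumed — it requires a Ma\~n\'e/Liao-type argument showing that if all periodic orbits in a robust neighborhood have index $\indi$, then the uniform contraction/expansion along the $\indi$- and $(n-1-\indi)$-dimensional subbundles of $P^t$ cannot fail, hence domination holds on the closure of $\per(X|_U)$ and, by robust transitivity, on all of the regular part of $\Lambda$. A secondary subtlety is the behavior at the singularities: one must verify that the adapted quadratic form, built on the normal bundles over regular orbits, glues across the hyperbolic singularities with the correct constant index precisely because $\indi(\sigma) > \indi$ — this is where strict inequality (rather than $\ge$) is needed, mirroring the role of strict inequalities in Corollary~\ref{mcor:strong-homog} and Theorem~\ref{mthm:strong-homog-to-quad}. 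Once the dominated splitting is in hand, the remaining steps are essentially the quadratic-form constructions already carried out in \cite{ArSal2012} and recalled in Section~\ref{sec:fields-quadrat-forms}, together with the bookkeeping for the reversed flow.
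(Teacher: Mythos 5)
Your top-level reduction --- produce a field of non-degenerate quadratic forms $\J$ of constant index $\indi$ with $X$ strictly $\J$-separated and the linear Poincar\'e flow strictly $\J$-monotone, then conclude partial hyperbolicity from Theorem~\ref{mthm:Jseparated-parthyp} --- is the same as the paper's, and your time-reversal argument is a legitimate way to make precise the paper's ``the other case is analogous''. The genuine gap is the step you yourself flag as the main obstacle. You do not prove the dominated splitting for the linear Poincar\'e flow over the regular part of $\Lambda$; you appeal to ``exactly the kind of input used in \cite{GanLiWen2005, shi-gan-wen2014}''. But the statement being proved \emph{is} Theorem A of \cite{GanLiWen2005}, and producing that domination from robust transitivity plus strong homogeneity is precisely the main content of that paper, so as written the argument is circular (or, read charitably, the hardest step is simply missing). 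The paper closes exactly this gap with its own Theorem~\ref{mthm:strong-homog-to-quad}: strong homogeneity together with $\indi(\sigma)>\indi$ (plus \cite[Lemma 4.1]{GanLiWen2005}, which supplies the dominated splitting $T_\sigma M=E_\sigma\oplus F_\sigma$ with $\dim E_\sigma=\indi$ at each singularity) yields a field $\J$ for which $X$ is strictly $\J$-separated and $P^t$ is strictly $\J$-monotone on every compact invariant nonsingular subset of $\Lambda$; Theorem~\ref{mthm:Jseparated-parthyp} then finishes. You should invoke Theorem~\ref{mthm:strong-homog-to-quad} after checking its hypotheses, rather than re-derive its content by a fresh Ma\~n\'e/Liao-type argument you do not carry out.

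Two smaller points. First, your monotonicity is claimed only on the ``nonsingular recurrent part'', while what is actually used (and what Theorem~\ref{mthm:strong-homog-to-quad} delivers) is strict $\J$-monotonicity on \emph{every} compact invariant nonsingular subset; in the paper this comes from the fact that such subsets of a star flow are hyperbolic by \cite{GanWen2006}, not from a direct domination argument for $P^t$. Second, your third and fourth steps (extending $\J_N$ across the flow direction and gluing at the singularities) are only sketched; the paper avoids this delicate hand-construction by first obtaining a dominated splitting of $T_\Lambda M$ of index $\indi$ (via the Ergodic Closing Lemma \cite{Man82}, the estimates of Theorem~\ref{thm:lyap-exp-sing-val}, and \cite[Theorem C]{AraArbSal}) and then taking the adapted metric of \cite{Goum07}, which produces $\J$ globally with the correct index. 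With the appeal to Theorem~\ref{mthm:strong-homog-to-quad} in place of your steps one through four, your write-up becomes essentially the paper's proof.
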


\proof
We are going to deal with the case $\indi(\sigma) > \indi$, the other case is analogous.

Since $\Lambda$ is strongly homogeneous and $\indi(\sigma) > \indi$, by \cite[Lemma 4.1]{GanLiWen2005} there is a dominated splitting $T_{\sigma}M = E_{\sigma} \oplus F_{\sigma}$ such that $\dim(E_{\sigma}) = \indi$.
Hence, Theorem \ref{mthm:strong-homog-to-quad} implies that there exists a field of non-degenerate quadratic forms $\J$ on $\Lambda$ with index $\indi(\J) = \indi(\Lambda)$ for which $X$ is 
strictly $\J$-separated and the associated linear Poincar\'e flow $P^t$ is strictly $\J$-monotone on every compact invariant subset $\gamma$ of $\Lambda^*$.
Therefore, Theorem \ref{mthm:Jseparated-parthyp} completes the proof.$\diamond$

Some immediate results follow from the main theorems.

The following consequences of these results follows from the
robustness of sectional hyperbolicity and the theory of
sectional hyperbolic transitive sets for homogeneous flows
from~\cite{MeMor06} and \cite{ArbMo2013}.

\begin{corollary}\label{mcor:2-sec-exp-J-monot}
  Let $X\in \mathfrak{X}^{1}(M), \dim(M) \geq 4$ with a nontrivial transitive compact invariant set
  $\Lambda$ whose singularities, if any, are
  hyperbolic.

  Then the following conditions are equivalent:
  \begin{enumerate}
  \item There exists a family $\J$ of smooth non-degenerate
    indefinite quadratic forms with constant index
    $\indi(\J)$ on $\Lambda$ such that $X$ is a
    non-negative strictly $\J$-separated vector field, for which the linear
    Poincar\'e flow is strictly $\J$-monotonous on every compact
    invariant set in $\Lambda_X(U)^*=\Lambda_X(U)\setminus \sing(X)$
  \item The set $\Lambda$ is a sectional-hyperbolic
    subset for $X$ with constant index
    $\indi(\cO)=\indi(\J)$ for all periodic orbits $\cO$ of
    $\Lambda$ and $\indi(\sigma)=Ind(\J)+1$ for all
    singularities $\sigma\in\Lambda\cap \sing(X)$.
\end{enumerate}
\end{corollary}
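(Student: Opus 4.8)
The plan is to prove the two implications separately, using the main theorems of the paper together with the already-cited structural results on sectional-hyperbolic transitive sets for homogeneous flows. For the implication $(2)\Rightarrow(1)$, I would argue as follows. Suppose $\Lambda$ is sectional hyperbolic with the stated index conditions. By definition sectional hyperbolicity gives a partially hyperbolic splitting $T_\Lambda M = E\oplus F$ with $\dim E = \indi(\J)$; by Theorem~\ref{mthm:Jseparated-parthyp} this is equivalent to the existence of a $C^1$ field $\J$ of non-degenerate quadratic forms with constant index $\indi(\J)$ for which $X$ is non-negative and strictly $\J$-separated on a trapping neighborhood $U$. It remains to upgrade $\J$-separation to strict $\J$-monotonicity of the linear Poincaré flow on every compact invariant nonsingular subset $K$ of $\Lambda_X(U)^*$. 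Here I would invoke the index-homogeneity hypothesis on periodic orbits together with Corollary~\ref{mcor:strong-homog}/Theorem~\ref{mthm:strong-homog-equiv}: since all periodic orbits in $U$ (for $X$ and nearby flows, by robustness of sectional hyperbolicity and of the preperiodic set, Remark~\ref{rmk:wen-preperiod}) have index $\indi(\J)$, the set is strongly homogeneous of that index, and the quadratic form produced by the strong-homogeneity direction of Theorem~\ref{mthm:strong-homog-equiv} makes $P^t$ strictly $\J$-monotone on $P_*(X\vert_U)$. To push monotonicity from preperiodic orbits to all compact invariant nonsingular subsets one uses the volume-expansion along $F$ from sectional hyperbolicity: on any such $K$, $P^t$ expands the $\J$-positive directions (via Proposition~\ref{pr:product-vol-exp} and Remark~\ref{rmk:J-mon-spec}, $r_+\ge 1$ and $r_-\le 1$ are forced by domination plus sectional volume expansion), and strictness follows because $\J'$ is positive on the cone of null vectors away from the flow direction, which is exactly where the singularity condition $\indi(\sigma)=\indi(\J)+1$ is used to guarantee compatibility of the form near $\sing(X)\cap\Lambda$.

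For the reverse implication $(1)\Rightarrow(2)$, I would start from the $\J$-field with $X$ non-negative and strictly $\J$-separated, hence by Theorem~\ref{mthm:Jseparated-parthyp} the maximal invariant set $\Lambda_X(U)$ (and in particular $\Lambda$) is partially hyperbolic with $\dim E = \indi(\J)$. Strict $\J$-monotonicity of $P^t$ on every compact invariant subset of $\Lambda_X(U)^*$ gives, via Remark~\ref{rmk:J-mon-spec} and Proposition~\ref{pr:product-vol-exp}, uniform expansion of two-dimensional (indeed all intermediate-dimensional up to $\dim F$) volumes inside the positive cone, i.e. sectional expansion along $F$ — this is precisely the $k=2$ case of the characterization in \cite{ArSal2012} of sectional hyperbolicity via $\J$-algebra. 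Thus $\Lambda$ is sectional hyperbolic. The index statements for critical elements are then read off: periodic orbits $\cO\subset\Lambda$ lie in nonsingular compact invariant subsets, so the stable dimension of $\cO$ equals the dimension of the negative cone, namely $\indi(\J)$; and for a singularity $\sigma$, hyperbolicity plus the fact that $\sigma$ is accumulated by regular orbits of the sectional-hyperbolic set forces the Lorenz-like condition $\indi(\sigma)=\indi(\J)+1$ — here I would cite the structure theory of \cite{MeMor06, ArbMo2013} for transitive homogeneous sectional-hyperbolic sets, which pins down the index of singularities once the regular part has constant index.

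The main obstacle I anticipate is the passage, in the $(2)\Rightarrow(1)$ direction, from strict $\J$-monotonicity on \emph{preperiodic} orbits (which is all that Theorem~\ref{mthm:strong-homog-equiv} directly supplies) to strict $\J$-monotonicity on \emph{every} compact invariant nonsingular subset of $\Lambda_X(U)^*$. Preperiodic orbits need not be dense in an arbitrary compact invariant subset, so one cannot simply take limits; instead I would exploit that $\Lambda$ is transitive, so $\Lambda$ itself is chain transitive and $\Lambda\subset R(X)\subset$-closure of $P_*(X)$, and combine this with the robustness and non-explosion of the preperiodic set (Remark~\ref{rmk:wen-preperiod}) to conclude that the strict monotonicity estimate, being an open condition on the (compact) space of $\J$-separated cocycles, propagates to the whole maximal invariant set. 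A secondary delicate point is ensuring the field $\J$ extends continuously (and $C^1$ along the flow) across the singularities with $\J'(v)>0$ for all $v\in T_\sigma M$, which is where the hypothesis $\indi(\sigma)=\indi(\J)+1$ enters and where one must check that the partially hyperbolic splitting at $\sigma$ is compatible with the cone field coming from the regular dynamics; this is handled exactly as in the proof of \cite[Theorem D]{ArSal2012}.
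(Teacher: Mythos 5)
Your proposal diverges from the paper in both directions, and in the direction $(2)\Rightarrow(1)$ it contains a genuine gap. The paper disposes of $(2)\Rightarrow(1)$ in one line, by invoking \cite[Theorem D]{ArSal2012}: that theorem already delivers, for a sectional-hyperbolic set, a field $\J$ with $X$ non-negative strictly $\J$-separated \emph{and} the linear Poincar\'e flow strictly $\J$-monotone on every compact invariant nonsingular subset. You instead try to reassemble this from Theorem~\ref{mthm:Jseparated-parthyp} (which only gives partial hyperbolicity and strict $\J$-separation) plus the strong-homogeneity direction of Theorem~\ref{mthm:strong-homog-equiv} (which only gives strict $\J$-monotonicity of $P^t$ on the preperiodic set), and then to ``propagate'' strict monotonicity to arbitrary compact invariant subsets of $\Lambda_X(U)^*$ by arguing that it is an open condition on a compact space of $\J$-separated cocycles, combined with chain transitivity and Remark~\ref{rmk:wen-preperiod}. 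This propagation step is not a proof: strict $\J$-monotonicity is a strict pointwise inequality $\partial_t\J(P^tv)\mid_{t=0}>0$, and strict inequalities on preperiodic orbits do not pass to closures or to other invariant subsets without a uniform lower bound, which you never establish; nor is the ``compact space of cocycles'' on which you want to use openness identified anywhere. The correct mechanism (used implicitly through \cite[Theorem D]{ArSal2012}, and also visible in the proof of Theorem~\ref{mthm:strong-homog-to-quad}) is that a compact invariant \emph{nonsingular} subset of a sectional-hyperbolic set is uniformly hyperbolic, and uniform hyperbolicity together with an adapted form yields strict monotonicity directly; your sectional-volume-expansion remark gestures at this but, as written via Proposition~\ref{pr:product-vol-exp} and Remark~\ref{rmk:J-mon-spec}, only gives the non-strict conclusions $r_+\ge1$, $r_-\le1$.

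For the direction $(1)\Rightarrow(2)$ your route (read off sectional expansion from strict $\J$-monotonicity, i.e.\ the $\J$-algebra characterization of \cite{ArSal2012}, and then pin down the singular index by the structure theory of \cite{MeMor06,ArbMo2013}) is workable but is not what the paper does: the paper observes that strict $\J$-monotonicity on preperiodic orbits makes $\Lambda$ strongly homogeneous (Theorem~\ref{mthm:strong-homog-equiv}) and then applies the dichotomy \cite[Corollary 8]{ArbMo2013} for nontrivial transitive strongly homogeneous sets in dimension at least four, which yields sectional hyperbolicity (with the stated indices) in one step. Your version would additionally need the hypotheses of the quoted $\J$-algebra equivalence (a trapping region/maximal invariant set and uniformity of the monotonicity estimates over $\Lambda$), so if you keep your route you must verify those, whereas the paper's appeal to strong homogeneity avoids them.
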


For the next statement, we recall that a hyperbolic singularity
$\sigma$ is said to be of codimension one if its index satisfies
either $\indi(\sigma) = 1$ or $\indi(\sigma) = n - 1$, where $n = \dim(M)$.

\begin{remark}\label{obs-lyap-est}
Every attracting set is Lyapunov stable.
\end{remark}

\begin{corollary}\label{thm:lyap-est-sec-hyp}
 Let $\Lambda \subset M^n, n \geq 4$, be a nontrivial transitive set, which is Lyapunov stable for $X$, with singularities all of them hyperbolic of codimension one.
Then, the following properties are equivalent:
\begin{enumerate}
\item{} $\Lambda$ is sectional-hyperbolic with $1 \leq dim(E^s) = \indi(\J) \leq n-2$;
\item{} There exists a field of non-degenerate quadratic forms with
constant index $1 \leq \indi(\J) \leq n-2$ such that $X$ is non-negative
strictly $\J$-separated on $\Lambda$ and every compact invariant subset
$\Gamma \subset \Lambda$ is strictly $\J$-monotone for linear Poincar\'e flow associated to $X$.
\end{enumerate}
\end{corollary}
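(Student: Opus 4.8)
The plan is to deduce this result from the more general equivalence in Corollary \ref{mcor:2-sec-exp-J-monot}, using the codimension-one hypothesis on the singularities together with Lyapunov stability to pin down the admissible indices. First I would observe that the implication (1)$\Rightarrow$(2) is essentially contained in Corollary \ref{mcor:2-sec-exp-J-monot}: if $\Lambda$ is sectional-hyperbolic with $\dim(E^s)=\indi(\J)=:i$ and $1\le i\le n-2$, then by definition there is a partially hyperbolic splitting $T_\Lambda M=E\oplus F$ with $\dim E=i$ and volume (indeed sectional) expansion along $F$, and Theorem \ref{mthm:Jseparated-parthyp} together with the construction behind Corollary \ref{mcor:2-sec-exp-J-monot} yields the field $\J$ of non-degenerate quadratic forms of constant index $i$ for which $X$ is non-negative and strictly $\J$-separated and for which $P^t$ is strictly $\J$-monotone on every compact invariant nonsingular subset; the sectional expansion along $F$ is exactly what forces strict $\J$-monotonicity of the linear Poincar\'e flow on all of $\Lambda^*$, not merely on preperiodic orbits.

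For the converse (2)$\Rightarrow$(1), I would apply Corollary \ref{mcor:2-sec-exp-J-monot} directly: the hypotheses of item (2) here are precisely item (1) there (a non-negative strictly $\J$-separated vector field with constant index $\indi(\J)$ and strict $\J$-monotonicity of $P^t$ on every compact invariant subset of $\Lambda_X(U)^*$), so we conclude that $\Lambda$ is sectional-hyperbolic with $\indi(\cO)=\indi(\J)$ for all periodic orbits and $\indi(\sigma)=\indi(\J)+1$ for all $\sigma\in\Lambda\cap\sing(X)$. It remains only to check the stated bound $1\le\dim(E^s)=\indi(\J)\le n-2$. Here the codimension-one assumption enters: each singularity has $\indi(\sigma)=1$ or $\indi(\sigma)=n-1$, and since $\indi(\sigma)=\indi(\J)+1$ we get $\indi(\J)\in\{0,n-2\}$. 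The case $\indi(\J)=0$ must be excluded using Lyapunov stability together with nontriviality: if $\indi(\J)=0$ then the contracting bundle $E$ is trivial, the whole tangent space over $\Lambda$ would be volume-expanding transverse to the flow, and a nontrivial Lyapunov stable set cannot be a repeller-type set of this kind — more precisely, a Lyapunov stable sectional-hyperbolic set with $\dim(E^s)=0$ would be a nontrivial attracting set all of whose transverse directions expand volume, which is impossible for a nonempty compact invariant set (one obtains a contradiction by pushing a small transverse volume forward and using Lyapunov stability to keep it in a compact region while its volume grows without bound). Symmetrically, since the singularities are hyperbolic of codimension one and are accumulated by regular orbits, one also rules out $\indi(\J)=n-1$, leaving $1\le\indi(\J)\le n-2$.

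The main obstacle I anticipate is the last step: cleanly excluding the extreme index $\indi(\J)=0$ (equivalently $\indi(\sigma)=1$ for the singularities) for a nontrivial Lyapunov stable set. One must argue that such a set, being a sectional-hyperbolic set with no stable direction, would force the maximal invariant set to be a repeller, contradicting Lyapunov stability unless $\Lambda$ is a single attracting singularity — but that is excluded by nontriviality and by the singularities being of index $1$ rather than $0$. The honest way to handle this is to invoke the structure theory of sectional-hyperbolic Lyapunov stable sets (as developed in \cite{MeMor06}, \cite{ArbMo2013}): a Lyapunov stable sectional-hyperbolic set is an attractor, and an attractor must have a nontrivial stable foliation transverse to the flow, hence $\dim(E^s)\ge1$; combined with the codimension-one constraint forcing $\dim(E^s)\in\{0,n-2\}$, we land on $\dim(E^s)=n-2$ if $n=4$, and in general on $1\le\dim(E^s)\le n-2$ after ruling out the top value by the same attractor argument applied to $-X$. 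I would present this dichotomy carefully rather than gloss over it, since it is the only place where the Lyapunov-stability and codimension-one hypotheses are genuinely used.
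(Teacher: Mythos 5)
Your direction (1)$\Rightarrow$(2) is essentially the paper's: sectional hyperbolicity gives the field $\J$ via Theorem~\ref{mthm:Jseparated-parthyp} (together with the singular-hyperbolicity characterization of \cite[Theorem D]{ArSal2012} for the strict monotonicity of $P^t$), and the index count uses that the flow direction lies in $F$ (the paper makes this explicit through Lemma~\ref{le:flow-center}). The problem is the converse. The paper's proof of (2)$\Rightarrow$(1) does \emph{not} go through Corollary~\ref{mcor:2-sec-exp-J-monot}: it first uses Theorem~\ref{mthm:strong-homog-equiv} to convert the existence of $\J$ into \emph{strong homogeneity} of $\Lambda$ with index $\indi(\J)$, and then invokes the Arbieto--Morales result for \emph{Lyapunov stable}, transitive, strongly homogeneous sets whose singularities are hyperbolic of \emph{codimension one} (\cite[Corollary 9]{ArbMo2013}) to conclude sectional hyperbolicity. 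That is precisely where the two distinctive hypotheses of this corollary are consumed. Your plan instead applies Corollary~\ref{mcor:2-sec-exp-J-monot} verbatim, which begs the question: that corollary rests on a different external statement (\cite[Corollary 8]{ArbMo2013}), whose applicability requires control on how the Morse indices of the singularities match $\indi(\J)$ --- exactly the information that, in the present setting, has to be extracted from Lyapunov stability plus the codimension-one assumption. If Corollary~\ref{mcor:2-sec-exp-J-monot} could be quoted with no such input, then Corollary~\ref{thm:lyap-est-sec-hyp} would be an empty specialization and its extra hypotheses idle; the fact that you end up using them only to ``pin down the index'' is the symptom of this short-circuit. The missing idea, relative to the paper, is the passage through strong homogeneity (Theorem~\ref{mthm:strong-homog-equiv}) and the Lyapunov-stable version of the Arbieto--Morales dichotomy.

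Your index-pinning paragraph also does not hold up. The bound $1\le\indi(\J)\le n-2$ is part of hypothesis (2), so there is nothing to exclude there: if some $\sigma\in\Lambda\cap\sing(X)$ had $\indi(\sigma)=1$, this would be incompatible with $\indi(\sigma)=\indi(\J)+1$ and $\indi(\J)\ge1$ directly, with no volume-growth argument needed; moreover ``sectional hyperbolic with $\dim(E^s)=0$'' is not even admissible in the paper's framework, since Definition~\ref{def1} requires both bundles nontrivial. Conversely, ``ruling out the top value by the same argument applied to $-X$'' is wrong: when singularities are present the expected outcome is exactly $\dim(E^s)=n-2$ (Lorenz-like singularities of index $n-1$), so the top value must survive. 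Finally, the volume-pushing argument you sketch as the ``honest way'' is left entirely undeveloped and is not what Lyapunov stability is needed for here. In short: keep your (1)$\Rightarrow$(2), but the (2)$\Rightarrow$(1) direction should be rewritten along the paper's lines --- strict $\J$-separation plus strict $\J$-monotonicity of $P^t$ $\Rightarrow$ strong homogeneity of index $\indi(\J)$ by Theorem~\ref{mthm:strong-homog-equiv}, then \cite[Corollary 9]{ArbMo2013} for Lyapunov stable transitive sets with codimension-one hyperbolic singularities.
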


\subsection{Proof of Corollaries \ref{mcor:2-sec-exp-J-monot} and \ref{thm:lyap-est-sec-hyp}}

\proof[proof of Corollary \ref{mcor:2-sec-exp-J-monot}]

First of all, we will need the following lemma from \cite{ArbMo2013}:

\begin{lemma}\label{le:cor8-ArbMor}\cite[Corollary 8]{ArbMo2013}
Let $\Lambda$ be a nontrivial transitive set of $X$ which is strongly homogeneous with singularities (all of them of codimension one).
If $n\geq 4$ and $1 \leq \indi(\Lambda) \leq n-2$, then $\Lambda$ is sectional hyperbolic up to a flow-reversing.
\end{lemma}

 Indeed, suppose that $(2)$ is true. Then, $X$ is strongly homogeneous on $\Lambda$.
 By Lemma \ref{le:cor8-ArbMor} this is a sectional hyperbolic set of $X$.
 To prove the converse statement, we need just use \cite[Theorem D]{ArSal2012}.$\diamond$

The next proof needs the following lemmas.

Let $\Lambda$ be a compact invariant set for a flow
  $X$ of a $C^1$ vector field $X$ on $M$.
\begin{lemma} \cite[Lemma 5.1]{AraArbSal}
  \label{le:flow-center}
  Given a continuous splitting $T_\Lambda M = E\oplus F$
  such that $E$ is uniformly contracted, then $X(x)\in F_x$ for all $x\in \Lambda$.
\end{lemma}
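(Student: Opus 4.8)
The plan is to argue by contradiction, using the $DX_t$-invariance of the line field spanned by $X$ together with the uniform backward expansion of $E$. The statement is trivially true at singularities, since there $X(\sigma)=0\in F_\sigma$, so fix $x\in\Lambda$ and write $X(x)=v^E+v^F$ with $v^E\in E_x$, $v^F\in F_x$ according to the splitting $T_xM=E_x\oplus F_x$; the goal is to show $v^E=0$.

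First I would record two uniform bounds coming from the compactness of $\Lambda$ and the continuity of the splitting: (i) there is $C_0>0$ with $\|X(y)\|\le C_0$ for all $y\in M$; and (ii) since $E\oplus F$ is a \emph{continuous} splitting over the compact set $\Lambda$, the family of projections $\pi^E_y\colon T_yM\to E_y$ (along $F_y$) is uniformly bounded, say $\|\pi^E_y\|\le C_1$ for all $y\in\Lambda$. Next I would convert the forward contraction hypothesis on $E$ into a backward expansion estimate: from $\|DX_t|_{E_y}\|\le Ke^{-\lambda t}$ for all $t>0$ and all $y\in\Lambda$, and the invariance $DX_t\, E_{X_{-t}(x)}=E_x$, one obtains $\|DX_{-t}w\|\ge K^{-1}e^{\lambda t}\|w\|$ for every $w\in E_x$ and every $t>0$ (apply the contraction bound to $DX_{-t}w\in E_{X_{-t}(x)}$).

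Then I would combine these facts. On one hand, by invariance of the flow direction, $DX_{-t}X(x)=X(X_{-t}(x))$, and since $\Lambda$ is invariant the point $X_{-t}(x)$ lies in $\Lambda$, so by (i), $\|DX_{-t}X(x)\|\le C_0$ for all $t>0$. On the other hand, by invariance of the splitting, $DX_{-t}v^E\in E_{X_{-t}(x)}$ and $DX_{-t}v^F\in F_{X_{-t}(x)}$, hence $DX_{-t}v^E=\pi^E_{X_{-t}(x)}\big(DX_{-t}X(x)\big)$, and so by (ii), $\|DX_{-t}v^E\|\le C_1C_0$ for all $t>0$. Comparing with the backward expansion estimate gives $K^{-1}e^{\lambda t}\|v^E\|\le\|DX_{-t}v^E\|\le C_1C_0$ for all $t>0$, which forces $v^E=0$. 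Therefore $X(x)=v^F\in F_x$, as claimed.

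I do not expect a serious obstacle here; the only point requiring a little care is the uniform bound (ii) on the projections $\pi^E_y$, which follows from the continuity of $y\mapsto E_y$ and $y\mapsto F_y$ on the compact invariant set $\Lambda$ (equivalently, continuity of the idempotents $\pi^E_y$ and compactness). Everything else is a direct consequence of the invariance of $\langle X\rangle$ and of the splitting under $DX_t$, together with the definition of uniform contraction read backwards in time.
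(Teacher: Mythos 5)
Your argument is correct and is essentially the standard proof, which coincides with the approach of the cited source \cite{AraArbSal}: the paper itself only quotes the lemma without reproving it, and the proof there likewise plays the exponential backward expansion on $E$ (read off from the forward contraction and invariance) against the uniform bound on $\|X\|$ along the negative orbit inside the compact invariant set, using the uniformly bounded projections coming from continuity of the splitting. The only point worth flagging is that you necessarily use the $DX_t$-invariance of both subbundles (to get $DX_{-t}v^E\in E_{X_{-t}(x)}$ and the backward expansion); this is implicit in the statement as quoted here but is an explicit hypothesis in the original lemma, so your use of it is legitimate.
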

and
\begin{lemma} \cite[Corollary 9]{ArbMo2013}
  \label{le:ArbMo13}
  Let $\Lambda$ be a nontrivial transitive set of $X$ which is strongly homogeneous with singularities (all of them hyperbolic of codimension one).
  If $n \geq 4$, $1\geq \indi(\Lambda)\geq n-2$ and $\Lambda$ is Lyapunov stable, then it is sectional hyperbolic for $X$.
\end{lemma}

\proof[Proof of Corollary \ref{thm:lyap-est-sec-hyp}]

Suppose that $\Lambda$ is sectional-hyperbolic with decomposition $E \oplus F$. So, it is clearly strongly homogeneous. Once the subbundles are non-trivial and $E$ is uniformly contracting, we must have $1 \leq \dim(E) := \indi(\J) \leq n-2$, because by Lemma \ref{le:flow-center}, $\langle X \rangle \subset F$.

 By Theorem \ref{mthm:Jseparated-parthyp}, there exists a field $\J$ of differentiable quadratic forms with constant index equal to the dimension of $E$ with the required properties.

  Reciprocally, the existence of such a field $\J$ implies, by Theorem \ref{mthm:Jseparated-parthyp},
  that $\Lambda$ is strongly homogeneous of index $\indi(\J)$.
  Thus, once the singularities are hyperbolic of codimension one, it is enough to use Lemma \ref{le:ArbMo13}.$\diamond$

\section{Proof of Theorems \ref{cor:star-flow}, \ref{mthm:strong-homog-equiv} and Corollary \ref{mcor:strong-homog}}

Now, we prove our mains results.

Before to proceed the proofs, we need to recall some results related to the theory of the Extended Linear Poincar\'e Flow from Gan, Li and Wen \cite{GanLiWen2005}.

\subsection{Extended Linear Poicar\'e Flow}\label{sec:ELPF}

First of all, we recall that the Linear Poincar\'e Flow $P^{\, t}$ of $X_t$
along the orbit of $x$ is defined by projecting $DX_t$ orthogonally
over $N_{X_t(x)}$ for each $t\in\RR$:
\begin{align*}
  P^{\, t} v := \Pi_{X_t(x)}DX_t v ,
  \quad
  v\in T_x M, t\in\RR, X(x)\neq 0,
\end{align*}
where $\Pi_{X_t(x)}:T_{X_t(x)}M\to N_{X_t(x)}$ is the
projection on $N_{X_t(x)}$ parallel to $X(X_t(x))$.  We
remark that the definition of $\Pi_x$ depends on $X(x)$ only.
The linear Poincar\'e flow $P^{\,t}$ is a linear
multiplicative cocycle over $X_t$ on the set $U$ with the
exclusion of the singularities of $X$.

It is well known \cite{Do87} that a vector field on $M$ is Anosov if, and only if, the associated Linear Poincar\'e Flow is hyperbolic on $N_x$, for all $x \in M$. In this case, it is also well known that there is no singularities of $X$ in $M$. A similar result holds to a compact $X$-invariant set $\Lambda \subset M$, changing "Anosov" by "uniformly hyperbolic" in the statement.

In the context of singular flows presenting certain kind of (weak) hyperbolicity, the authors in \cite{GanLiWen2005} worked out an extension of the classical Linear Poincar\'e Flow to singularities. As it is used in some results contained in this paper, here we present some proofs for completeness.

Denote the one-dimensional Grassmannian manifold of $M$ by
$$
G^1 = G^1(M) = \{L ; L \ \textrm{is an one dimensional subspace of} \ T_xM, x \in M\}.
$$

Given a $C^1$ vector field $X$, from the tangent flow $DX_t$ we may induce a flow on $G^1$
\begin{eqnarray*}
\Phi_t: G^1 \to G^1 \\
 L \mapsto DX_t(L) .
\end{eqnarray*}

Based on \cite{GanLiWen2005}, we define the extended linear Poincar\'e flow as follow.

Let $\beta: G^1 \to M$ and $\zeta: TM \to M$ denote the two bundle projections of $G^1$ and $TM$ on $M$, respectively.
Denote the pullback bundle by
$$
\beta^*(TM) = \{(L,v) \in G^1 \times TM; \beta(L) = \zeta(v)\}.
$$
This is a $n$-dimensional vector bundle over the base space $G^1$ with the bundle projection
\begin{eqnarray*}
\iota : \beta^*(TM) \to G^1 \\
\iota (L,v) = L.
\end{eqnarray*}

The induced inner product on $\beta^*(TM)$ is given by
$$
\langle (L,u),(L,w) \rangle =\langle u,w \rangle
$$
for any $(L,u),(L,w) \in \beta^*(TM)$, where $\langle \cdot , \cdot \rangle$ is the inner product on $TM$.

The tangent flow $DX_t:TM \to TM$ induces an \emph{extended tangent flow} on $\beta^*(TM)$
\begin{eqnarray*}
\Phi^{*}_{t}: \beta^*(TM) \to \beta^*(TM) \\
\Phi^{*}_{t}(L, v) = (\Phi_{t}(L), DX_t(v))
\end{eqnarray*}
such that the diagram below
$$
\begin{array}{ccc}
\beta^*(TM) & \stackrel {\Phi^{*}_{t}}{\longrightarrow} & \beta^*(TM)\\
\iota \downarrow  &  &  \downarrow \iota \\
G^1 & \stackrel {\Phi_t}{\longrightarrow} & G^1
\end{array}
$$
commutes.

Given any $L \in G^1$ and any subspace $E \subset T_{\beta(L)}M$, by the relation between $DX_t$ and its induced flow on $\beta^*(TM)$, we have
$$
\Vert \Phi^*_t\mid_{\{L\} \times E} \Vert = \Vert DX_t\mid_{E} \Vert.
$$

We also define the following subbundles of $\beta^{*}(TM)$

\begin{align*}
S = \{(L, v) \in\beta^{*}(TM); v \in L\}
\end{align*}
which is an one-dimensional subbundle of $\beta^*(TM)$, independent of $X$,
and its orthogonal complement

\begin{align*}
\mathcal{N} = \{(L, v) \in\beta^{*}(TM); v \perp L\}.
\end{align*}

The fiber of $S$ at $L$, $P_L$, is the set of vectors that represent the direction vectors of $L$.

The extended Poincar\'e flow $\Psi^{X}_t:\mathcal{N}^X \to \mathcal{N}^X $ is defined by

\begin{align}\label{def-ext-poinc-flow}
\Psi^{X}_t = \Pi^X \circ \Phi^{*,X}_t,
\end{align}

where $\Pi: \beta^{*}(TM) \to \mathcal{N}$ is the orthogonal projection.

A dominated splitting for $\Psi^{X}_t$
over a subset $A \subset G1$ consists of a
direct sum of $\Psi^{X}_t$-invariant subbundles
$\mathcal{N}^Y_A = N^1_A \oplus N^2_A$ together with positive
constants $K, \lambda$ satisfying
$$
\frac{\Vert \Psi^{X}_t\vert_{N^1_L} \Vert}{m(\Psi^{X}_t\vert_{N^2_L} )}\leq K e^{-\lambda t}, \forall \ (L, t) \in A \times \mathbb{R}^+.
$$

The useful lemma below will be used soon.

\begin{lemma}\cite[Lemma 3.3]{GanLiWen2005}
\label{le:GLW}

Let $X \in \mathfrak{X}^{1}(M)$ and $\sigma \in Sing(X)$. Let $B \in G^1_{\sigma}$ be an invariant set
of $\Phi_t:G^1 \to G^1$ and $E \subset T_{\sigma}M$ be an invariant linear subspace of $X_t: TM \to TM$.
\begin{enumerate}
\item{} If $E = E^1 \oplus E^2$ is a dominated splitting for $X_t$, then $\beta^*(E)\mid_B = \beta^*(E^1)\mid_B \oplus \beta^*(E^2)\mid_B$ is a dominated splitting for $\Phi^*_t$.
\item{} If $\beta^*(E)\mid_B = F^1 \oplus F^2$ is a dominated splitting for $\Phi^*_t$, then $F^1_L$ and $F^2_L$ are independent of $L \in B$. In fact, there is a dominated splitting $E = E^1 \oplus E^2$ for $X_t$, such that $F^i = \beta^*(E^i)\mid_B, i=1,2$.
\end{enumerate}
\end{lemma}

The next lemma will be very useful in the sequel, and the original statement can be found at \cite[Lemma 4.1]{GanLiWen2005}. There the authors also assume the strong transitiveness of the vector field, but we stress that the same proof holds without it. For completeness, its proof is presented here.

For any $\sigma \in \sing(X) \cap \Lambda$ we denote $B_{\sigma}(\Lambda) = \{ L \in B(\Lambda): \beta(L) = \sigma\}$, where $B(\Lambda)= \{L \in G^1: \beta(L) \in \Lambda \cap Per_*(X), i.e., \exists Y_n \to X, p_n \in Per(Y_n), \textrm{such \ that} \langle Y_n(p_n)\rangle \to L\}$.

\begin{lemma}
\label{le:GLW2}

Let $X \in \mathfrak{X}^{1}(M)$ and $\Lambda$ be a singular set of $X$ which is strongly homogeneous of index $\indi$. Suppose that $\sigma \in Sing(X) \cap \Lambda$, with $\indi(\sigma) > \indi$. Then $E^{s}_{\sigma}$ splits into a dominated splitting $E^{s}_{\sigma} = E^{ss}_{\sigma} \oplus E^{c}_{\sigma}$ with respect to $X_t: TM \to TM$ where $\dim E^{ss}_{\sigma} =  \indi$, such that for any $L \in B_\sigma (\Lambda)$ with $L \in E^u_\sigma$ one has $N^{s}_{L} = \{L\} \times E^{ss}_{\sigma}$.
\end{lemma}

\proof
  Consider
  \begin{align*}
    B^u_\sigma (\Lambda) = \{L \in B_\sigma (\Lambda): L \in E^u_\sigma\}.
  \end{align*}
  We have $B^u_\sigma (\Lambda) \neq \emptyset$, closed and $\Phi_t$-invariant. By Lemma \ref{le:GLW}, there is a dominated splitting on $N^{u}_{B_\sigma (\Lambda)}$ with respect to $\Phi_t^*$ of index $\indi$:
    \begin{align}\label{eq:dom-spl-normal}
   N_{B^u_\sigma (\Lambda)} = N^{s}_{B^u_\sigma (\Lambda)} \oplus N^{u}_{B^u_\sigma (\Lambda)}.
  \end{align}
  And, then, there exists a dominated splitting, with respect to $\Phi_t^*$,
  \begin{align}\label{eq:dom-spl-normal-2}
   \beta^*(TM)\vert_{B^u_\sigma (\Lambda)} = \beta^*(E^s_\sigma)\vert_{B^u_\sigma (\Lambda)} \oplus \beta^*(E^u_\sigma)\vert_{B^u_\sigma (\Lambda)}.
  \end{align}
  By intersecting with $N_{B^u_\sigma (\Lambda)}$, and observing that $E^s_\sigma \perp E^u_\sigma$, we obtain another spltting
\begin{align}\label{eq:dom-spl-norm-3}
N_{B^u_\sigma (\Lambda)} = \beta^*(E^s_\sigma)\vert_{B^u_\sigma (\Lambda)} \oplus (\beta^*(E^u_\sigma)\vert_{B^u_\sigma (\Lambda)} \cap N_{B^u_\sigma (\Lambda)})
\end{align}
such way that, for any $L \in B^u_\sigma (\Lambda)$,
\begin{align*}
N_L = \{L\} \times L^\perp = \{L\} \times E^s_\sigma \oplus \{L\} \times (E^u_\sigma \cap L^\perp),
\end{align*}
where $L^\perp$ is the orthogonal complement subspace of $L$ in $T_\sigma M$.

As $E^s_\sigma \perp L$, for any $L \in B^u_\sigma (\Lambda)$, we have this splitting continuous and invariant under $\Phi_t: N \to N$.

{\b Claim}: This splitting is dominated under $\Phi_t: N \to N$.

Indeed, since $\sigma$ is hyperbolic, there is a $T > 0$ such that
\begin{align*}
  \Vert X_T\vert_{E^s_\sigma}\Vert \cdot\Vert X_{-T}\vert_{E^u_\sigma}\Vert \leq \frac{1}{2}.
\end{align*}
And, since $E^s_\sigma \perp L$, for any $L \in B^u_\sigma (\Lambda)$, we have
\begin{align*}
  \Phi_t\vert_{\{L\} \times E^s_\sigma} =  \Phi^*_t\vert_{\{L\} \times E^s_\sigma}.
\end{align*}
Hence,
\begin{align*}
 \Vert \Phi_T\vert_{\{L\} \times E^s_\sigma} \Vert = \Vert \Phi^*_T\vert_{\{L\} \times E^s_\sigma}\Vert = \Vert X_T\vert_{E^s_\sigma}\Vert.
\end{align*}

On the other hand, $L \subset E^u_\sigma$ implies that
\begin{align*}
 \Vert \Phi_{-T}\vert_{\{L_T\} \times (E^u_\sigma \cap L^{\perp}_T)} \Vert \leq \Vert \Phi^*_{-T}\vert_{\{L_T\} \times E^u_\sigma}\Vert = \Vert X_{-T}\vert_{E^u_\sigma}\Vert,
\end{align*}
where $L_T = X_T(L)$. Then, we obtain
\begin{align*}
  \Vert \Phi_T\vert_{\{L\} \times E^s_\sigma} \Vert \cdot \Vert \Phi_{-T}\vert_{\{L_T\} \times (E^u_\sigma \cap L^{\perp}_T)} \Vert \leq \frac{1}{2}.
\end{align*}

So, (\ref{eq:dom-spl-normal-2}) is dominated for $\Phi_t:N \to N$.

By (\ref{eq:dom-spl-normal}) and (\ref{eq:dom-spl-normal-2}), and because $\indi(\sigma) > \indi$, we get
\begin{align*}
  N^{s}_{B^u_\sigma (\Lambda)}\subset \beta^*(E^s_\sigma)\vert_{B^u_\sigma (\Lambda)} \ \ \beta^*(E^u_\sigma)\vert_{B^u_\sigma (\Lambda)} \cap N^{u}_{B^u_\sigma (\Lambda)} \subset N^{u}_{B^u_\sigma (\Lambda)}.
\end{align*}

Moreover, we have that
\begin{align}\label{eq:dom-splt-ELPF}
  \beta^*(E^s_\sigma)\vert_{B^u_\sigma (\Lambda)} = N^{s}_{B^u_\sigma (\Lambda)}\subset \oplus (N^{u}_{B^u_\sigma (\Lambda)} \cap \beta^*(E^s_\sigma)\vert_{B^u_\sigma (\Lambda)})
\end{align}
is a dominated splitting under $\Phi_t:N \to N$. Finally, as $\Phi_t\vert_{\beta^*(E^s_\sigma)\vert_{B^u_\sigma (\Lambda)}} = \Phi^*_t\vert_{\beta^*(E^s_\sigma)\vert_{B^u_\sigma (\Lambda)}}$, the splitting \ref{eq:dom-splt-ELPF} is also dominated with respect to $\Phi^*_t:\beta^*(TM) \to \beta^*(TM)$.

By Lemma \ref{le:GLW}, $E^s_\sigma = E^{ss}_{\sigma} \oplus E^{c}_{\sigma}$ is a dominated splitting for $X_t:TM \to TM$, with $\dim E^{ss}_{\sigma} = \dim (N^{s}_{B^u_\sigma (\Lambda)}) = \indi$.

In addition, for any $L \in B_\sigma (\Lambda)$ such that $L \subset E^u_\sigma$, one has $N_L^s = \{L\} \times E^{ss}_{\sigma}$.

This complete the proof of our claim and the Lemma as well.$\diamond$

To prove the Theorem \ref{cor:star-flow} we use the following result from \cite{ArSal2012}.
\begin{proposition}\cite[item 3,Theorem 2.23]{ArSal2012}
\label{prop:J-hyperbolic}
Let $\Gamma$ be a compact invariant set for $X$ with a dominated splitting $T_{\Gamma}N = E \oplus F$ for the linear Poincar\'e flow $P^t(x)$ on $X$ over $\Gamma$. Let $\J$ be a $C^1$ field of indefinite quadratic forms such that $P^t(x)$ is strictly  $\J$-separated. Then, $E \oplus F$ is uniformly hyperbolic if, and only if, there is an equivalent field $\J$ of quadratic forms on a neighborhood of $\Gamma$ such that $\J'(v) > 0$, for all $v \in T_{\Gamma}N$ and all $x \in \Gamma$.
\end{proposition}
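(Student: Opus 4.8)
The plan is to prove the two implications separately, after pinning down what ``equivalent'' buys us. Because $DX_t$ is strictly $\J$-separated and $E\oplus F$ is a dominated splitting, domination forces the cone field of $\J$ to be adapted to the splitting: $F_x\subset C_+(x)\cup C_0(x)$, $E_x\subset C_-(x)\cup C_0(x)$, and $\indi(\J)=\dim E$ (this is the content of the earlier items of the cited statement in \cite{ArSal2012}; it can also be argued directly, since the bundle landing inside $C_+$ under forward iteration must be the one that is not dominated). Thus ``an equivalent field of quadratic forms'' may be taken to mean a continuous field $\widetilde{\J}$, $C^1$ along the flow, with $\indi(\widetilde{\J})=\dim E$ and the same extreme subspaces $E$ and $F$; then $DX_t$ is automatically strictly $\widetilde{\J}$-separated. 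I would also record, via Lemma~\ref{le:flow-center}, that $X(x)\in F_x$ at each regular $x$, so the flow direction needs no special treatment.

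\emph{Proof of ($\Leftarrow$).} Suppose such a $\widetilde{\J}$ exists with $\widetilde{\J}'(v)>0$ for every $v\neq0$. By the cocycle identity, $\partial_s\big(\widetilde{\J}_{X_s(x)}(DX_s v)\big)\mid_{s=t}$ equals $\widetilde{\J}'$ of the vector $DX_t v$ at the point $X_t(x)\in\Gamma$, hence is positive, so $t\mapsto\widetilde{\J}_{X_t(x)}(DX_t v)$ is strictly increasing and in particular $DX_1(x)$ is strictly $\widetilde{\J}$-monotone for every $x\in\Gamma$. By Proposition~\ref{pr:J-separated-spectrum}(3) (equivalently Remark~\ref{rmk:J-mon-spec}) this gives $r_-(DX_1(x))<1<r_+(DX_1(x))$, and compactness of $\Gamma$ with continuity upgrades this to uniform bounds $\sup_{x\in\Gamma} r_-(DX_1(x))<1<\inf_{x\in\Gamma} r_+(DX_1(x))$. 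Corollary~\ref{cor:compos-max-exp} then yields $r_+(DX_n(x))$ growing and $r_-(DX_n(x))$ decaying exponentially, uniformly over $\Gamma$. To convert these $\widetilde{\J}$-singular-value estimates into norm estimates I would exploit strictness: $DX_\varepsilon$ sends the \emph{closed} positive cone minus $0$ into the \emph{open} one, so by compactness the images $DX_\varepsilon(F_x)$ of unit vectors lie in a cone of uniformly positive ``width'', on which $\widetilde{\J}$ and $\|\cdot\|^2$ are comparable with uniform constants; combining this with the elementary inequality $\widetilde{\J}(Lv)\ge r_+(L)^2\,\widetilde{\J}(v)$ for $v$ in the positive cone (immediate from the $L=RU$ decomposition of Proposition~\ref{pr:J-separated-spectrum}) and with $\widetilde{\J}(w)\le\|w\|^2$, one gets a uniform exponential lower bound for $\|DX_t|_{F_x}\|$, i.e. $F$ is uniformly expanded. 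For $E$ I would rerun this for the time-reversed flow $X_{-t}$, which is strictly $(-\widetilde{\J})$-separated (Remark~\ref{rmk:J-separated-C-}) and, by the same derivative computation, strictly $(-\widetilde{\J})$-monotone, with $E_x$ inside the positive cone of $-\widetilde{\J}$; this makes $E$ uniformly contracted. Hence $E\oplus F$ is uniformly hyperbolic.

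\emph{Proof of ($\Rightarrow$).} Assume $E$ uniformly contracted and $F$ uniformly expanded, and fix $0<\lambda'<\lambda$. I would build an adapted (Lyapunov) metric on a neighbourhood of $\Gamma$ by the standard integrals $\|v\|_{*,E}^2:=\int_0^\infty e^{2\lambda's}\|DX_s v\|^2\,ds$ for $v\in E$ and $\|w\|_{*,F}^2:=\int_0^\infty e^{2\lambda's}\|DX_{-s}w\|^2\,ds$ for $w\in F$; both converge by hyperbolicity, depend $C^1$ on $t$ along orbits, and satisfy $\partial_t\|DX_t v\|_{*,E}^2\mid_{t=0}\le-2\lambda'\|v\|_{*,E}^2$ and $\partial_t\|DX_t w\|_{*,F}^2\mid_{t=0}\ge 2\lambda'\|w\|_{*,F}^2$. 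Then I would set $\widetilde{\J}_x(v):=\|\pi_F v\|_{*,F}^2-\|\pi_E v\|_{*,E}^2$ with $\pi_E,\pi_F$ the projections of $E\oplus F$. This field is non-degenerate and $C^1$ along the flow, its negative cone contains $E$ and its positive cone contains $F$, and a dimension count gives $\indi(\widetilde{\J})=\dim E$, so it is equivalent to $\J$ in the sense fixed above. Finally, since $DX_t$ preserves $E$ and $F$, one has $\widetilde{\J}_{X_t(x)}(DX_t v)=\|DX_t\pi_F v\|_{*,F}^2-\|DX_t\pi_E v\|_{*,E}^2$ with no cross terms, whence $\widetilde{\J}'(v)\ge 2\lambda'\big(\|\pi_F v\|_{*,F}^2+\|\pi_E v\|_{*,E}^2\big)>0$ for every $v\neq 0$, which is the required property.

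\emph{Main obstacle.} The genuinely delicate step is the passage in ($\Leftarrow$) from the $\widetilde{\J}$-singular values $r_\pm(DX_t)$, which a priori only control the quadratic form $\widetilde{\J}$ along orbits, to honest exponential rates for $\|DX_t|_{E}\|$ and $\|DX_t|_{F}\|$; the obstruction is that $\widetilde{\J}$-isometries can distort the Riemannian norm without bound. What rescues it is the interplay between strictness of the cone invariance and compactness of $\Gamma$: strictness pushes the relevant bundles, after an arbitrarily short time, into cones of uniformly positive width, where $\widetilde{\J}$ and $\|\cdot\|^2$ are uniformly comparable, and compactness makes all the implied constants uniform. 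Securing this uniformity — rather than merely the pointwise statements — is where the real work lies; everything else is bookkeeping with Proposition~\ref{pr:J-separated-spectrum}, Corollary~\ref{cor:compos-max-exp}, and the classical adapted-metric construction.
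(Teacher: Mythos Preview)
The present paper does not contain a proof of this proposition: it is quoted from \cite[item~3, Theorem~2.23]{ArSal2012} and invoked as a black box in the proof of Theorem~\ref{cor:star-flow}. There is therefore no argument in this paper against which to compare yours. For what it is worth, your sketch is in the spirit of the $\J$-algebra framework of the cited reference --- the adapted-metric construction \`a la \cite{Goum07} for $(\Rightarrow)$, and the singular-value/monotonicity machinery of Proposition~\ref{pr:J-separated-spectrum} and Corollary~\ref{cor:compos-max-exp} for $(\Leftarrow)$ --- and you correctly isolate the one genuinely non-formal step, namely the passage from $\widetilde{\J}$-control along orbits to honest exponential norm estimates on $E$ and $F$ via strict cone invariance and compactness.
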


\begin{remark}\label{rmk:thm-Jhyp-poincare}
We stress that the cocycle $A_t(x)$ in the original statement of the above result has been replaced by the linear Poincar\'e flow $P^t$, in our applications here.
\end{remark}

\proof [Proof of Theorem \ref{cor:star-flow}]
If $X$ is a star flow, then each singular point $\sigma$ is hyperbolic and it is well known that its hyperbolic decomposition $E^s_{\sigma} \oplus E^u_{\sigma}$ is a dominated one. So, by using adapted metrics (see \cite{Goum07}) we construct the desired quadratic form $J_\sigma$ such that $X$ is strictly separated (see \cite{ArSal2012}) and, by Proposition \ref{prop:J-hyperbolic}  $\J'(v)>0$ for all $v \in T_{\Gamma}M$.

Analogously, for every periodic orbit $\gamma$ of $X$, consider the hyperbolic splitting $T_\gamma M = E^s \oplus E^X \oplus E^u$. Again, considering $E^s \oplus (E^X \oplus E^u)$ as a dominated splitting we obtain a quadratic form $\J$ for which $X$ is strictly separated on $\gamma$. By construction of the adapted metrics, we have that $\J$ is $C^ 1$ along the flow (see \cite{Goum07} for details about the construction of such a adapted metric). In addition, the linear Poincar\'e flow associated to $X$, $P^t$ is hyperbolic and then $\J$-monotone on $\gamma$.

If $\gamma$ is a sink (respectively, a source) the splitting $E^s \oplus E^X$ (respectively, $E^u \oplus E^X$) is a dominated one and we proceed constructing the cones the same way, however the core of the nonnegative cone is the field direction.

Moreover, by definition of a preperiodic point $p$, there is a sequence of vector fields $Y_n$ and periodic points $q_n$ of $Y_n$ such that $Y_n \to X$ in $C^1$ topology and $q_n \to p$. Up to a subsequence, we can assume that the orbits of $(q_n, Y_n)$ have the same stable index and tends, in Hausdorff sense, to a compact $X$-invariant set $L$, which is clearly contained into $P_*(X)$ and $p\in L$.

If $T_{y_n}N=N^s_n\oplus N^u_n$ is a $P^{\,t}_{Y_n}$-hyperbolic splitting,
which is equivalent to be strictly $\J_{n}$-monotonous, where $\J_n$ is the quadratic form corresponding to the $Y_n$-hyperbolic splitting.

Then, using the compactness of the Grassmannian over the compact set $L$ this property persists,
by the normal hyperbolic theory \cite{HPS77}.

Indeed, we also have that the hyperbolic splitting $T_{q_n}M = N_n^s\oplus N_n^u$ with respect to linear Poincar\'e flow of $Y_n$ extends to a (maybe worse) hyperbolic splitting of a neighborhood of $L$. Thus, by uniform continuity of the respective quadratic forms $\J_n$, associated to the adapted metric of the hyperbolic splittings of $Y_n$, we obtain the continuous field of quadratic forms to $p$ on the set $L$.

By following \cite[Section 2.5]{ArSal2012}, given a strict $\J$-separated cocycle $B_t$ associated to the each flow $Y_n$ and its decomposition $T_{\Gamma}M$ and $\epsilon > 0$, we can find $\bar{\J}$ a smooth extension of $\J$ such that
\begin{enumerate}
\item $|\J_y(v)-\bar\J_y(v)|<\epsilon$ for all $v\in T_yM,
  y\in V$ ($C^0$-closeness on $V$);
\item
  $|\partial_t\J_{X_t(x)}(A_t(x)v)-\partial_t\bar\J_{X_t(x)}(A_t(x)v)|<\epsilon$
  for all $v\in T_{x}M, x\in L$ and $t\in\mathbb{R}$.
\end{enumerate}

Because monotonicity property is stronger than separation one, the above properties evidently hold to $\J$-monotonous cocycles.

Note that $\partial_t \J(P^t v)>0, \forall v \in T_xM, X(X_t(x))\neq 0, t \geq 0$, is equivalent to $\J(P^t v)>\J (v), \forall T_xN, X(x)\neq 0$ (\cite[Lemma 4.4]{ArSal2012}).

Then, we have $\J$-monotonicity on preperiodic orbits of $X$, by $C^1$-approximation.

Reciprocally, take a small neighborhood $U$ of $P_*(X)$ such that there is a $C^1$ neighborhood $\mathcal{V}$ of $X$ for which $P_*(Y) \subset U$ and suppose that such a field of quadratic forms is defined on. By Proposition \ref{prop:J-hyperbolic}, every singularity $\sigma \in U$ is hyperbolic.
The case of periodic orbits is analogous.

Shrinking $U$, if necessary, we may suppose that, for each periodic orbit and each singularity in $U$ of each $Y \in \mathcal{V}$, we have quadratic forms (still denoted $\J$) with the same features as before.
Indeed, since the quadratic form on each periodic orbit is $C^1$ along the flow, for any $Y \in \mathcal{V}$, shrinking $\mathcal{V}$ if necessary, we must have that any periodic orbit of $Y$ close enough to $X$ present stricly montonicity for the linear Poincar\'e flow, by normal hyperbolic theory. Then, it is done without change the signal of the derivative of the quadratic form.

Hence, for every vector field $Y$ in a sufficient small $C^1$-neighborhood $\mathcal{V}$ of $X$ and for every cocycle $B_t$ over $Y$ close to $A_t$, we have the same $\J$ properties. This is a consequence of the possibility of extension of the cone fields to an $\epsilon$-neighborhood of $L$ and the infinitesimal generator $D_{Y,B}$ of $B_t$ will be a linear map close to the infinitesimal generator $D$ of $A_t$.

Let $\mathcal{O}$ a preperiodic orbit of $X$ and $p \in \mathcal{O}$ a preperiodic point.

Again, we assume that the orbits of $(q_n, Y_n)$ tends, in Hausdorff sense, to a compact $X$-invariant set $L$, such that, we know, is contained into $P_*(X)$ and $\mathcal{O} \subset L$.

By hyphotesis, exists the $C^1$ field of quadratic forms $\J$ such that the linear Poincar\'e is strictly monotone on  $\mathcal{O}$. Then, we obtain a continuous $P^t$-invariant decomposition on the normal bundle $T_pN = N^-_p \oplus N^+_p$ such that $N^-_p$ contracts and $N^+_p$ expands uniformly, and it is well known that this is a robust property. Here, $N^-_p = \cap_{t>0} P^{-t}(p)(C_-(X_t(p)))$ and $N^+_p = \cap_{t>0} P^t(p)(C_+(X_{-t}(p)))$, for the corresponding negative/positive cones $C_{\pm}$ over the preperiodic orbit.

In other words, for every vector field $Y$, in a sufficient small $C^1$-neighborhood $\mathcal{V}$ of $X$, and for every cocycle $B_t$ over $Y$ close to $A_t$, and then to $P^t$, we must have the same $\J$-properties. This is a consequence of the cone fields extension to a neighborhood of $L$ and because the infinitesimal generator $D_{Y,B}$ of $B_t$ be a linear map close to the infinitesimal generator $D$ of the cocycle $A_t$ over $X$.

Moreover, by $C^1$-closeness between $X$ and $Y$, uniform continuity of the $\J$ derivative on preperiodic orbits in $L$ and robustness of strictly $\J$-monotonicity property, the signal of the derivative of the extension of the quadratic form clearly remains positive for nearby orbits of nearby vector fields.

The continuous extension of the cones is well known by classical hyperbolic theory and, more important to say, its properties remain valid for any vector field $C^1$-close enough to $X$.
As the periodic orbits of the approximating vector fields converge to a compact set of $X$ containing the preperiodic orbit, we obtain uniform continuity of the corresponding quadratic forms in a neighborhood of this compact set for nearby flows, see Remark \ref{rmk:forms-continuity} below.

If, for some $Y \in \mathcal{V}$, another periodic orbit is created, by $C^1$-closeness it is strictly $\J$-monotone for the linear Poincar\'e flow $P^t_Y$ associated to $Y$, since it comes from a preperiodic one of $X$ which is strictly $\J$-monotone for the linear Poincar\'e flow $P^t_X$, by hyphotesis. See \cite[Section 2.5.4]{ArSal2012} and the proof of \cite[Theorem 4.3]{ArSal2012}.

Moreover, cone criterion holds to $C^1$-close vector fields of $X$.

Hence, every periodic orbit for any $Y \in \mathcal{V}$ is hyperbolic. Therefore, $X$ is a star flow and the theorem is proved.$\diamond$

\begin{remark}\label{rmk:forms-continuity}
  A quadratic form $J: E_x \to \mathbb{R}$ be continuosly differentiable along the flow in $x \in U$ means that, for every section $Z$ of a finite dimensional vector bundle $E_U$ with base $U$, the map $x \mapsto J(Z(x))$ is continuous. In particular, in Theorem \ref{cor:star-flow} this is the case either for each preperiodic point $x$ of $X$, when we consider $X \in \mathfrak{X}^*(M)$, or for the hyperbolic periodic orbits of $Y_n$ and its hyperbolic continuation.
The $C^1$ assumption on $J$ along the flow of each $x \in U$ gives us that the map $t \mapsto J_{X_t(x)}(Z(X_t(x)))$ is continuously differentiable for each $C^1$ section $Z$ of $E_U$.
\end{remark}

Now, it is proved the second main result.

\proof [Proof of Theorem \ref{mthm:strong-homog-equiv}]

Since the linear Poincar\'e flow is strictly $\J$-monotone on each preperiodic orbit $\gamma \subset \Lambda^*$ implies that, if $\gamma$ is a closed orbit, then it is a hyperbolic subset of $\Lambda$, with a constant index, which we denote $\indi(\J)$. Moreover, taking a small enough neighborhood $U$ of $Per_*(X\vert_\Lambda)$ there exists some neighborhood $\mathcal{V}$ of $X$ such that $Per_*(Y\vert_{\Lambda_Y})\subset U$, $\forall Y \in \mathcal{V}$. If some periodic orbit $\gamma_Y$ is created by a small $C^1$ perturbation of $X$, it comes from a preperiodic orbit of $X$. Thus, $\gamma_Y$ is a hyperbolic closed orbit of $Y$, and must have index equal to $\indi(\J)$.

Hence, $\indi(\J)$ does not change by small differentiable perturbations of $X$ on a neighborhood of $\Lambda$, so the index of hyperbolic periodic orbits also does not change. Therefore, $\Lambda$ is strongly homogeneous for $X$.

Reciprocally, if $\Lambda$ is strongly homogeneous of index $\indi$, then cannot be there a non-hyperbolic periodic orbit. Otherwise, we can create two periodic orbits with different indices, by Frank's Lemma. Moreover, $X$ is a star flow in a neighborhood of $\Lambda$.
Hence, by Theorem \ref{cor:star-flow} we can define the desired field of quadratic forms $\J$, with fixed index $\indi(\J) = \indi$, defined on a neighborhood $U$ of $\Lambda$, where $U$ is the neighborhood for which $Per_*(Y\vert_{\Lambda_Y}) \subset U$ for any $Y$ close enough to $X$.$\diamond$

\proof [Proof of Corollary \ref{mcor:strong-homog}]

Note that Corollary \ref{mcor:strong-homog} follows from Theorem \ref{mthm:strong-homog-equiv}, since any periodic orbit is a preperiodic one and any singularity $\sigma$ is accumulated by regular orbits, it cannot present $\indi(\sigma) < \indi{\J}$. Indeed, $X \in \mathfrak{X}^{1}(M)$, $\J$ is a continuous field of quadratic forms and $X$ is $\J$- monotonic over any compact invariant nonsingular set $\Gamma$.$\diamond$

\subsection{Proof of Theorem \ref{mthm:strong-homog-to-quad}}\label{sec:thmD}

In this section, we prove our last main result.

To proceed the demonstration of Theorem \ref{mthm:strong-homog-to-quad}, we recall some definitions which are necessary from now.

Let $Z$ be a compact metric space and denote $\mathcal{M}(Z)$ the set of probabilities measures on the Borel $\sigma$-algebra of $Z$.
If $T: Z \to Z$ is a measurable map, we say that a probability measure $\mu$ is an invariant measure of $T$, if $\mu(T^{-1}(A)) = \mu(A)$, for every measurable set $A \subset Z$.  We say that $\mu$ is an invariant measure of $X$ if it is an invariant measure of $X_t$ for every $t \in \mathbb{R}$. We will denote by $\mathcal{M}_X$ the set of all invariant measures of $X$. A subset $Y\subset Z$ has \emph{total probability} if for every $\mu\in \mathcal{M}_X$ we have $\mu(Y)=1$ (see \cite{Man82}). The support of a measure $\mu$, denoted by $supp(\mu)$, is the set of points for which the measure is non-zero. An invariant measure is said to be \emph{atomic} if its support is either a closed orbit or a singularity.

A probability measure $\mu$ is an \emph{ergodic measure} if for every invariant set $A$ we have $\mu(A) = 1$ or $\mu(A) = 0$. Finally, a certain property is said to be valid in \emph{$\mu$-almost every point} if it is valid in the whole Z except, possibly, in a set of null measure.
\vspace{0.1in}

We recall the definition of $\delta$-closable points of \cite{Man82}.
We say that a point $x \in M \setminus Sing(X)$ is $\delta$-closable if, for any $C^1$ neighborhood $\SU \subset \mathfrak{X}^{1}(M)$ of $X$, there exists a vector field $Z \in \SU$, a point $z \in M$ and $T > 0$ such that:
 \begin{enumerate}
  \item $Z_T(z) = z$,
  \item $Z = X$ on $M \setminus B_{\delta} (X_{[0, T]}(x))$ and
  \item $dist(Z_t(z), X_t(x)) < \delta, \forall 0 \leq t \leq T$.
 \end{enumerate}
We denote by $\Sigma(X)$ the set of points of $M$ which are $\delta$-closable for any $\delta$ sufficiently small.

\proof [Proof of Theorem \ref{mthm:strong-homog-to-quad}]
If $\Lambda$ is a strongly homogeneous set for $X$ with singularities all of them hyperbolic, then $X$ is a star flow in $\Lambda$.

By Ergodic Closing Lemma, the $\delta$-closable set of $X$ has total probability.

If $x \in \Lambda$ is a regular $\delta$-closable point, then it is a pre-periodic point of index $\indi(\Lambda)$.

According the proof of \cite[Lemma 5.3]{GanLiWen2005}, we have a dominated splitting $E_x \oplus F_x$ of index $\indi(\Lambda)$ in $T_xM$, for all $x$ .

By Theorem \ref{thm:lyap-exp-sing-val}, we have
\begin{align*}
\chi^-_1 + \cdots + \chi^-_{k_1} \leq \sum_{i=1}^{k_1} \int \log r^-_i \ud\nu \ \textrm{and} \ \chi^+_1 + \cdots + \chi^+_{k_2} \geq \sum_{i=1}^{k_2} \int \log r^+_i \ud\nu,
\end{align*}
for any $k_1 \leq q, k_2 \leq p$.

Consider the continuous splitting 
\begin{align*}
   \beta^*(TM)\vert_{B_\sigma (\Lambda)} = N^s \oplus P \oplus N^u.
  \end{align*}
over $B(\Lambda)$, where $P$ is $\Phi_t$-invariant and $N^s, N^u$ are $\Psi^t$-invariant, which exists by \cite{Liao1979} and the star condition of $X$.

Also, following \cite[Lemma I.5]{Man88} and \cite{Liao1979}, there exists a $\hat{T} > 0$ and an ergodic $\Phi{\hat{T}}$-invariant probability measure $\mu$, with $\supp (\mu) \subset B_\sigma (\Lambda)$, satisfying
\begin{align*}
\int (\ln \Vert DX_{\hat{T}}\vert_{N^s_L}\Vert - \ln \Vert \Phi_{\hat{T}}\vert_{P_L}\Vert) \ud\mu(L) \geq 0.
\end{align*}
Then, we obtain that the ergodic probability measures are not atomic. Indeed, suppose that there is $\sigma \in \Lambda$ such that  and $\mu(B_\sigma (\Lambda))=1$. As $\Vert DX_{\hat{T}}\vert_{N^s_L}\Vert = \Vert \Phi_{\hat{T}}\vert_{E^{ss}_\sigma}\Vert$, and $E^{ss}_\sigma$ dominates $E^{cu}_\sigma$, we must have the next contradiction
\begin{align*}
\int (\ln \Vert DX_{\hat{T}}\vert_{N^s_L}\Vert - \ln \Vert \Phi_{\hat{T}}\vert_{P_L}\Vert) \ud\mu(L) < 0.
\end{align*}

Now, Birkhoff's ergodic theorem and Corollary \ref{cor:compos-max-exp} imply that the Lyapunov exponents on $E$ are negative and the sectional Lyapunov exponents are positive, in a total probability subset of $\Lambda$.

Moreover, for singularities $\sigma \in \sing(\Lambda)$ we have two possibilities:

First case: $\sigma$ is accumulated by recurrent orbits (including periodic orbits), then since $\indi(\sigma) \geq \indi(\Lambda)$, by Lemma \ref{le:GLW2} there is a dominated splitting $T_{\sigma}M = E_{\sigma} \oplus F_{\sigma}$, where $dim (E) = \indi(\Lambda)$.

Second case: Either there exists a dominated splitting on $T_{\sigma}M = E_{\sigma} \oplus F_{\sigma}$ with $dim (E) = \indi(\Lambda)$, which guarantees the definition of $\J$ such that $X$ is stricly $\J$-separated. Or, otherwise, since $\sigma$ is an isolated hyperbolic singularity with $\indi(\sigma) \geq \indi(\Lambda)$, we have an invariant splitting for which we only guarantee that $\J$ such that $X$ is (not strictly) $\J$-separated.

So, we have an invariant splitting $T_{\Lambda}M = E_{\Lambda} \oplus F_{\Lambda}$ which has uniformly angle bounded away from zero and $T_{\sigma}M = E_{\sigma} \oplus F_{\sigma}$ is dominated for every $\sigma \in \sing(X)$.

Now, \cite[Theorem C]{AraArbSal} implies that the corresponding decomposition $T_{\Lambda}M = E \oplus F$ is dominated of index $\indi(\Lambda)$.

By using the adapted metric for dominated splitting \cite{Goum07}, we obtain a field of $C^1$ non-degenerated quadratic forms $\J$ such that $X$ strictly $\J$-separated over $\Lambda$, as in \cite{ArSal2012}.

Now, to prove the $\J$-monotonicity, take a compact invariant set $\Gamma$ in $\Lambda^*$. Since $X$ is a star flow and $\Gamma$ is nonsingular, by \cite[Theorem A]{GanWen2006}, this set must be a hyperbolic one. So, by well known results, the linear Poincar\'e flow associated to $X$ is strictly $\J$-monotone on any compact invariant set $\Gamma \in \Lambda^*$.$\diamond$

\section*{Acknowledgments.}I dedicate this work to my son, \'Icaro Sol, star of my life. \\ I am also grateful to: the anonymous referee for fruitful comments and suggestions, IMPA - Instituto de Matemática Pura e Aplicada where the seminal version of this work has began in 2012, Universidade Federal do Rio de Janeiro and Universidade Federal da Bahia for the hospitality which helped to deeply improve this work.


\begin{thebibliography}{10}
	
	\bibitem{AraArbSal}
	Ara{\'u}jo V, Arbieto A, Salgado L.
	\newblock Dominated splittings for flows with singularities.
	\newblock {\em Nonlinearity}, 26, 2391--2407 (2013).
	
	\bibitem{arcosal2018} Ara{\'u}jo V, Coelho V, Salgado L.
	\newblock Adapted Metrics for Singular Hyperbolic Flows.
	\newblock Bull Braz Math Soc, New Series 52, 815–840 (2021). https://doi.org/10.1007/s00574-020-00233-6
	
	\bibitem{AraPac2010} Ara{\'u}jo V, Pacifico MJ.
	\newblock {\em Three-dimensional flows}, volume~53 of {\em Ergebnisse der Mathematik und ihrer Grenzgebiete. 3. Folge. A Series of Modern Surveys in Mathematics [Results in Mathematics and Related Areas. 3rd Series. A Series	of Modern Surveys in Mathematics]}.
	\newblock Springer, Heidelberg (2010).
	\newblock With a foreword by Marcelo Viana.
	
	\bibitem{APPV}
	Ara{\'u}jo V, Pujals ER, Pacifico MJ, Viana M.
	\newblock Singular-hyperbolic attractors are chaotic.
	\newblock {\em Transactions of the A.M.S.}, 361:2431--2485 (2009).
	
	
	\bibitem{ArSal2012} Ara{\'u}jo V, Salgado L.
	\newblock Infinitesimal Lyapunov functions for singular flows.
	\newblock {\em Math. Z.}, {275}, no. {3-4}, {863--897}, (2013).
	
	\bibitem{ArSal2015} Ara{\'u}jo V, Salgado L.
	\newblock Dominated splitting for exterior powers and singular hyperbolicity.
	\newblock {\em  J. Differential Equations}, {259}, no. {8}, {3874--3893} (2015).
	
	\bibitem{arbieto2010}
	Arbieto A.
	\newblock Sectional lyapunov exponents.
	\newblock {\em Proc. of the American Mathematical Society}, 138:3171--3178 (2010).
	
	\bibitem{ArbMo2013} Arbieto A, Morales C.
	\newblock Dichotomy for higher-dimensional flows.
	\newblock {\em Proc. Amer. Math. Soc.}, {141}, no. {8}, {2817--2827} (2013).
	
	\bibitem{ArbSal2011} Arbieto A, Salgado L.
	\newblock On critical orbits and sectional hyperbolicity of the nonwandering set for flows.
	\newblock {\em  J. Differential Equations}, {250}, no. {6}, {2927--2939} (2011).
	
	\bibitem{BaMo14} Bautista S, Morales C.
	\newblock On the interesection of sectional-hyperbolic sets.
	\newblock {\em  J. of Modern Dynamics}, {9}, {203--218} (2015).
	
	\bibitem{BodL17} Bonatti C, da~Luz A.
	\newblock Star flows and multisingular hyperbolicity.
	\newblock \emph{Journal Europ. Math. Soc.}, Vol. 23, 8, 2649--2705. 2021.
	
	\bibitem{BDV2004}
	Bonatti C, D{\'i}az L.~J., Viana M.
	\newblock {\em {Dynamics beyond uniform hyperbolicity}}, volume {102} of {\em
		{Encyclopaedia of Mathematical Sciences}}.
	\newblock {Springer-Verlag}, {Berlin} (2005).
	\newblock {A global geometric and probabilistic perspective, Mathematical
		Physics, III}.

    \bibitem{CdLYZ} Crovisier S., da Luz A., Yang D. W., Zhang, J. 
    \newblock On the notions of singular domination and (multi-)singular hyperbolicity. 
    \newblock \emph{Sci China Math}, 2020, 63: 1721–1744, https://doi.org/10.1007/s11425-019-1764-x
	
	\bibitem{dLuz18} da~Luz A.
	\newblock {Star flows with singularities of different indices}.
	\newblock {\em arXiv:1806.09011} (2018).
	
	\bibitem{Do87}
	Doering C.~I..
	\newblock {Persistently transitive vector fields on three-dimensional
		manifolds}.
	\newblock In {\em {Procs. on Dynamical Systems and Bifurcation Theory}}, volume
	{160}, pages {59--89}. {Pitman} (1987).
	
	\bibitem{GanLiWen2005} Gan S, Li M, Wen L.
	\newblock {Robustly transitive singular sets via approach of an extended linear Poincar\'e flow}.
	\newblock {\em {Discrete and Continuous Dynamical Systems}}, volume
	{13}, number {2}, pages {239--269} (2005).
	
	\bibitem{shi-gan-wen2014} Shi Y, Gan S, Wen L.
	\newblock {On the singular hyperbolicity of star flows}.
	\newblock {\em {J. of Modern Dynamics}}, 8, n. 2, 191--219 (2014).
	
	\bibitem{GanWen2006} Gan S, Wen L.
	\newblock {Nonsingular star flows satisfy Axiom A and the no-cycle condition}.
	\newblock {\em {Inventiones Mathematicae}}, 164, 279--315 (2006).
	
	\bibitem{GanWenZhu2008} Gan S, Wen L, Zhu S.
	\newblock {Indices of singularities of robustly transitive sets}.
	\newblock {\em {Discrete and Continuous Dynamical Systems}}, vol. 21, 3, 945--957 (2008).
	
	\bibitem{Goum07}
	Gourmelon N.
	\newblock Adapted metrics for dominated splittings.
	\newblock {\em Ergodic Theory Dynam. Systems}, 27(6):1839--1849 (2007).
	
	\bibitem{Guck76}
	Guckenheimer J.
	\newblock A strange, strange attractor in
	\newblock {\em The Hopf Bifurcations and its applications}, 19:165--178,  Applied Math. Series, Springer Verlag (1976).
	
	\bibitem{HPS77}
	Hirsch M, Pugh C, Shub M.
	\newblock {\em {Invariant manifolds}}, volume {583} of {\em {Lect. Notes in
			Math.}}
	\newblock {Springer Verlag}, {New York}, {1977}.
	
	\bibitem{BurnKatok94}
	Katok A.
	\newblock Infinitesimal {L}yapunov functions, invariant cone families and
	stochastic properties of smooth dynamical systems.
	\newblock {\em Ergodic Theory Dynam. Systems}, 14(4):757--785 (1994).
	\newblock With the collaboration of Keith Burns.
	
	
	\bibitem{lewow80}
	Lewowicz J.
	\newblock Lyapunov functions and topological stability.
	\newblock {\em J. Differential Equations}, 38(2):192--209 (1980).
	
	\bibitem{lewow89}
	Lewowicz J.
	\newblock Expansive homeomorphisms of surfaces.
	\newblock {\em Bol. Soc. Brasil. Mat. (N.S.)}, 20(1):113--133 (1989).
	
	\bibitem{Liao1979} Liao ST.
	\newblock {A basic property of a certain class of differential systems,}.
	\newblock {\em Acta. Math. Sinica}, {22}:{316--343} (1979).
	
	\bibitem{Liao1980} Liao ST.
	\newblock {On the stability conjecture}.
	\newblock {\em Chinese Ann. Math.}, {1}:{9--30} (1980).
	
	\bibitem{Lo63}
	Lorenz EN.
	\newblock {Deterministic nonperiodic flow}.
	\newblock {\em {J. Atmosph. Sci.}}, {20}:{130--141} (1963).
	
	\bibitem{Maltsev63}
	Mal'cev AI.
	\newblock {\em Foundations of linear algebra}.
	\newblock Translated from the Russian by Thomas Craig Brown; edited by J. B.
	Roberts. W. H. Freeman \& Co., San Francisco, Calif.-London (1963).
	
	\bibitem{Man82}
	Ma{\~n}{\'e} R.
	\newblock {An ergodic closing lemma}.
	\newblock {\em {Annals of Math.}}, {116}:{503--540} (1982).
	
	\bibitem{Man88}
	Ma{\~n}{\'e} R.
	\newblock {A proof of $C^1$ stability conjecture}.
	\newblock {\em {Inst. Hautes \'Etudes Sci. Publ. Math.}}, {66}:{161--210} (1987).
	
	
	\bibitem{MeMor06}
	Metzger R, Morales C.
	\newblock Sectional-hyperbolic systems.
	\newblock {\em Ergodic Theory and Dynamical System}, 28:1587--1597 (2008).
	
	\bibitem{MPP04}
	Morales CA, Pacifico MJ, Pujals ER.
	\newblock {Robust transitive singular sets for 3-flows are partially hyperbolic
		attractors or repellers}.
	\newblock {\em {Ann. of Math. (2)}}, {160}({2}):{375--432} (2004).
	
	\bibitem{MPP99}
	Morales CA, Pacifico MJ, Pujals ER.
	\newblock Singular hyperbolic systems.
	\newblock {\em Proc. Amer. Math. Soc.}, 127(11):3393--3401 (1999).
	
	
	\bibitem{Os68}
	Oseledec VI.
	\newblock {A multiplicative ergodic theorem: Lyapunov characteristic numbers
		for dynamical systems}.
	\newblock {\em {Trans. Moscow Math. Soc.}}, {19}:{197--231} (1968).
	
	\bibitem{Palis87}Palis J.
	\newblock {\em {On the $C^1$ $\Omega$-stability conjecture}}.
	\newblock {\em {Inst. Hautes \'Etudes Sci. Publ. Math.}}, tome {66}:{211--215} (1987).
	
	
	\bibitem{Pota60}
	Potapov VP.
	\newblock The multiplicative structure of {$J$}-contractive matrix functions.
	\newblock {\em Amer. Math. Soc. Transl. (2)}, 15:131--243 (1960).
	\newblock Translation of Trudy Moskovskogo Matemati\v ceskogo Ob\v s\v cestva 4
	(1955), 125--236.
	
	\bibitem{Pota79}
	Potapov VP.
	\newblock Linear-fractional transformations of matrices.
	\newblock In {\em Studies in the theory of operators and their applications
		({R}ussian)}, pages 75--97, 177. ``Naukova Dumka'', Kiev (1979).
	
	\bibitem{Rob99} Robinson C.
	\newblock Dynamical Systems - Stability, Symbolic Dynamics and Chaos, 2nd edition.
	\newblock {\em Studies in advanced mathematics}, CRC Press, ISBN 0-8493-8495-8 (1999).
	
	\bibitem{SalgCoelho2017} Salgado L, Coelho V.
	\newblock Adapted metrics for codimension one singular hyperbolic flows.
	\newblock New trends in one-dimensional dynamics, {\em Springer Proc. in Math. and Stat.}, Vol. 285
	\newblock DOI: 10.1007/978-3-030-16833-9, (2019).
	
	\bibitem{Salg2014} Salgado L.
	\newblock Partially Dominated Splittings.
	\newblock {\em arxiv.org/abs/1402.1511} (2014).
	\newblock preprint.
	
	\bibitem{Salg2016} Salgado L.
	\newblock Singular Hyperbolicity and sectional Lyapunov exponents of various orders.
	\newblock {\em Proc. of the Amer. Math. Soc.}, doi.org/10.1090/proc/14254, (2018).
	
	\bibitem{shub87} Shub M.
	\newblock Global stability of dynamical systems.
	\newblock {\em Springer-Verlag New York Inc.}, ISBN 0-387-96295-6 (1987).
	
	\bibitem{Tu99} Tucker W.
	\newblock A rigorous ODE solver and Smale's 14th problem.
	\newblock \emph{Found. Comput. Math}, 2: 53--117. 2002.
	
	
	\bibitem{wen00}
	Wen L.
	\newblock On the preperiodic sets.
	\newblock {\em Discrete Cont. Dynam. Syst.}, 6, 237--241 (2000).
	
	\bibitem{Wojtk85}
	Wojtkowski M.
	\newblock Invariant families of cones and {L}yapunov exponents.
	\newblock {\em Ergodic Theory Dynam. Systems}, 5(1):145--161 (1985).
	
	
	\bibitem{Wojtk01}
	Wojtkowski M.
	\newblock Monotonicity, {$J$}-algebra of {P}otapov and {L}yapunov exponents.
	\newblock In {\em Smooth ergodic theory and its applications ({S}eattle, {WA},
		1999)}, volume~69 of {\em Proc. Sympos. Pure Math.}, pages 499--521. Amer.
	Math. Soc., Providence, RI (2001).
	
	\bibitem{Wojtk09}
	Wojtkowski M.
	\newblock A simple proof of polar decomposition in pseudo-{E}uclidean geometry.
	\newblock {\em Fund. Math.}, 206:299--306 (2009).
	
\end{thebibliography}
\end{document}